\setlist{topsep=0mm,partopsep=0mm,itemsep=1mm}
\theoremstyle{plain}
\newtheorem{lemma}{Lemma}[section]
\newtheorem{prop}[lemma]{Proposition}
\newtheorem*{mainthm}{Main Theorem}
\newtheorem*{thm*}{Theorem}
\theoremstyle{definition}
\newtheorem{cons}[lemma]{Construction}
\newtheorem{que}[lemma]{Question}
\theoremstyle{remark}
\newtheorem{cclaim}{Claim}[lemma]
\newenvironment{claim}{\begin{cclaim}\it}{\end{cclaim}}
\newcommand{\N}{\mathbb{N}}
\newcommand{\Z}{\mathbb{Z}}
\newenvironment{nitemize}{\begin{itemize}[label=\textbullet, leftmargin=5mm]}{\end{itemize}}
\begin{document}

\title[Subdirect powers of semigroups of integers]{On the number of subdirect products involving semigroups of integers and natural numbers}
\author[A.\ Clayton]{Ashley Clayton}
\address{School of Mathematics and Statistics, University of St Andrews, St Andrews, Scotland, UK}
\email{ac323@st-andrews.ac.uk}

\author[C.\ Reilly]{Catherine Reilly}
\address{School of Mathematics, University of East Anglia, Norwich NR4 7TJ, England, UK}
\email{C.Reilly@uea.ac.uk}

\author[N.\ Ru\v{s}kuc]{Nik Ru\v{s}kuc}
\address{School of Mathematics and Statistics, University of St Andrews, St Andrews, Scotland, UK}
\email{nr1@st-andrews.ac.uk}

\keywords{Semigroup, natural number, integer, subdirect product, indecomposable element.}
\subjclass[2010]{20M13, 20M14}

\begin{abstract}
We extend a recent result 
that for the (additive)   semigroup of positive integers $\N$, there are
continuum 
many subdirect products of $\N \times \N$ up to isomorphism. 
We prove that for $U,V$ each one of $\Z$ (the group of integers), $\N_{0}$ (the monoid of non-negative integers), or $\N$, the direct product $U \times V$ contains continuum many (semigroup) subdirect products up to isomorphism. 
\end{abstract}

\thanks{The third author acknowledges support from EPSRC EP/V003224/1.}
\maketitle

\section{Introduction}

In \cite{acnr1} it is proved that the direct product $\N\times\N$ of two copies of the free monogenic semigroup $\N$ contains uncountably many pairwise non-isomorphic subdirect products.
This is perhaps somewhat surprising, given that the direct product $\Z\times\Z$ of two copies of the free cyclic group contains only two subdirect products up to isomorphism, namely $\Z$ and $\Z\times\Z$ itself, and that the subsemigroup structure of $\N$ is not fundamentally different from the subgroup structure of $\Z$, in that both essentially depend on arithmetic progressions; see \cite{sitsiu75} for an explicit description.

The purpose of this paper is to extend the scope of the above-mentioned result from \cite{acnr1} and prove the following:

\begin{mainthm}
Let each of $U$ and $V$ be any of the following three additive semigroups: $\Z$, the group of integers; $\N_0$, the monoid of non-negative integers; $\N$, the semigroup of natural numbers. Then $U\times V$ contains continuum many non-isomorphic semigroup subdirect products of $U$ and~$V$. 
\end{mainthm}

By a \emph{subdirect product} of two semigroups $U$ and $V$ we mean any subsemigroup $P$ of $U\times V$ which projects \emph{onto} each of $U$ and $V$, i.e. $\{ u\::\: (u,v)\in P \text{ for some } v\}=U$ and
$\{ v\::\: (u,v)\in P \text{ for some } u\}=V$.
Subdirect products are an important decomposition tool in algebra in general, due to Birkhoff's decomposition theorem \cite[Theorem 4.44]{mmt87}.
They also have many intriguing combinatorial properties.
For some examples from group theory see \cite{baum00, baum84, brid09}, and for a discussion from the viewpoint of general algebra see \cite{mayr19}.

The rest of the paper constitutes the proof of the Main Theorem, using the following outline.
For reasons of symmetry, and keeping in mind that the case where $U=V=\N$ has been dealt with in \cite{acnr1}, it is sufficient to prove the theorem for $(U,V)$ in 
$\mathcal{P}=\{ (\N_0,\N), (\N_0,\N_0), (\Z,\N),(\Z,\N_0), (\Z,\Z) \}$.
In Section \ref{sec:cons} we construct a  family of subsemigroups $S_\sigma$ of $\Z \times \Z$, where $\sigma$ is a sequence of natural numbers with certain additional requirements. 
These requirements are sufficiently mild that the number of sequences satisfying them is uncountable.
We begin Section \ref{sec:inter} by proving that the intersection $S_\sigma\cap (U\times V)$ 
is a subdirect product  in $U\times V$ for each $(U,V)\in\mathcal{P}$ (Lemma \ref{la:insub}).
In the remainder of Section \ref{sec:inter} we consider each possibility for $(U,V)$ in turn, starting with $(U,V)=(\N_0,\N)$,
and show that
for $\sigma\neq\tau$ we have  $S_\sigma\cap (U\times V)\ncong S_\tau\cap (U\times V)$.
Thus the subsemigroups $S_\sigma\cap (U\times V)$ constitute uncountably many pairwise non-isomorphic subdirect products in $U\times V$, and the Main Theorem is proved.

Of the several assertions encompassed by the Main Theorem, perhaps the one concerning $\Z\times \Z$
is worth highlighting as somewhat surprising. As mentioned earlier, $\Z\times\Z$ contains \emph{countably many group} subdirect products. However, our result shows that it contains \emph{uncountably many semigroup} subdirect products.

\section{The semigroups $S_\sigma$}
\label{sec:cons}

We begin our work towards proving the Main Theorem by exhibiting a family $S_\sigma$ of subdirect products of $\Z\times\Z$ indexed by certain infinite sequences of natural numbers.
We first define the sets $S_\sigma\subseteq \Z\times \Z$, then prove they are subsemigroups, and finally that they are subdirect products.

\begin{cons}\label{cons:ssigma}
Given a sequence, $\sigma = (c_i)_{i\geq 2}$ of natural numbers satisfying 
\begin{equation}
\label{eq:sigma}
c_2=1\quad \text{and}\quad c_{i+1}\geq 2c_i \text{ for all } i\geq 2,
\end{equation}
define 
\[S_\sigma := \{(x, y) : x \leq 0, y\geq x\} \cup  \bigcup\limits_{k=2}^{\infty} \{(x, x + k) :  x = 1, \dots, c_k \}. \]
\end{cons}

The following comments and Figure \ref{fig:Ssig} may be of help in understanding $S_{\sigma}$ and how it will be treated subsequently.
\begin{nitemize}
\item
It is useful to consider $S_\sigma$ as a union of `vertical lines'. Specifically, $S_\sigma=\bigcup_{i\in \Z} L_i$, where
$L_i:=S_\sigma\cap ( \{i\} \times \Z)$.
\item
The lines $L_i$ with $i\leq 0$ are the same for all $S_\sigma$,  namely $L_i=\{ (i,x)\::\: x\geq i\}$.
\item
The remaining lines $L_i$, $i>0$, depend on $\sigma$. Each such line $L_i$ has a unique `lowest point', denoted $(i,l_i)$. The construction assures that $l_i>i$. The line contains all points above this lowest point, meaning that $(i,x) \in L_{i}$ for all integers $x \geq l_{i}$.
\item
The number $c_k = i$ indicates the rightmost line $L_i$ for which the lowest point is $(i,i+k)$.
\item
In other words, for any $i>0$ and $k\geq 2$, we have $L_i=\{ (i,x)\::\: x\geq i+k\}$ if and only if $c_{k-1}<i\leq c_k$ for all $i,k>0$.
\item
The conditions $c_2=1$ and $c_{i+1} \geq 2c_i$ are technical, and are needed to facilitate the proofs of closure below and non-isomorphism later on.
\item
Due to the fixed requirement $c_2=1$, we have that $L_1=\{ (1,x)\::\: x\geq 3\}$ is still the same for all $S_\sigma$.
\end{nitemize}

The above terminology and notation will be used throughout the paper. In Figure \ref{fig:Ssig} we visualise a typical example of $S_{\sigma}$.

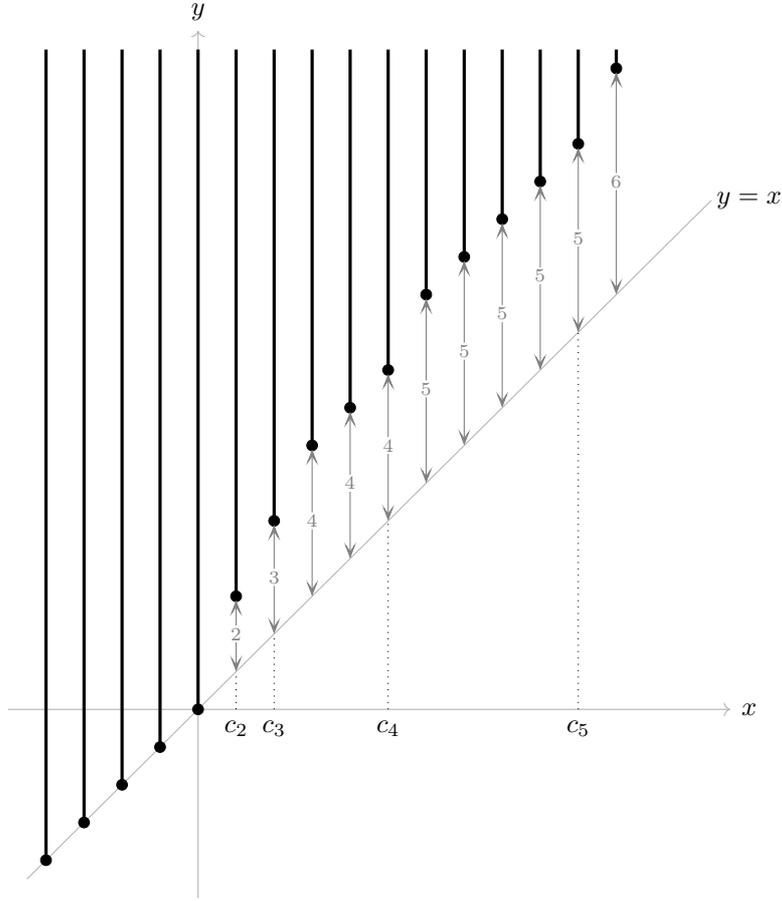
\begin{figure}[h!]
\begin{center}
\begin{tikzpicture}[x=5mm,y=5mm]
\draw [black!30,->] (-5,0) -- (14,0) {};
\draw [black!30,->] (0,-5) -- (0,18) {};
\draw [black!30] (-4.5,-4.5) -- (13.5,13.5) {};

\node at (14.5,0) {\small $x$};
\node at (0,18.5) {\small $y$};
\node at (14.5,13.5) {\small $y=x$};

\filldraw (-4, -4) circle (2pt);
\draw[very thick] (-4,-4) -- (-4,17.5);
\filldraw (-3, -3) circle (2pt);
\draw[very thick] (-3,-3) -- (-3,17.5);
\filldraw (-2, -2) circle (2pt);
\draw[very thick] (-2,-2) -- (-2,17.5);
\filldraw (-1, -1) circle (2pt);
\draw[very thick] (-1,-1) -- (-1,17.5);
\filldraw (0, 0) circle (2pt);
\draw[very thick] (0,0) -- (0,17.5);
\filldraw (1, 3) circle (2pt);
\draw[very thick] (1,3) -- (1,17.5);
\filldraw (2, 5) circle (2pt);
\draw[very thick] (2,5) -- (2,17.5);
\filldraw (3, 7) circle (2pt);
\draw[very thick] (3,7) -- (3,17.5);
\filldraw (4, 8) circle (2pt);
\draw[very thick] (4,8) -- (4,17.5);
\filldraw (5, 9) circle (2pt);
\draw[very thick] (5,9) -- (5,17.5);
\filldraw (6, 11) circle (2pt);
\draw[very thick] (6,11) -- (6,17.5);
\filldraw (7, 12) circle (2pt);
\draw[very thick] (7,12) -- (7,17.5);
\filldraw (8, 13) circle (2pt);
\draw[very thick] (8,13) -- (8,17.5);
\filldraw (9, 14) circle (2pt);
\draw[very thick] (9,14) -- (9,17.5);
\filldraw (10, 15) circle (2pt);
\draw[very thick] (10,15) -- (10,17.5);
\filldraw (11, 17) circle (2pt);
\draw[very thick] (11,17) -- (11,17.5);

\path [{Stealth[inset=2.6pt,scale=1.3]}-{Stealth[inset=2.6pt,scale=1.3]}, black!50,draw] (1,1) --
(1,2) node [circle,fill=white,inner sep=0,draw=white] {{\tiny 2}} -- (1,2.9);
 \draw [{Stealth[inset=2.6pt,scale=1.3]}-{Stealth[inset=2.6pt,scale=1.3]},black!50] (2,2) -- (2,3.5) node [circle,fill=white,inner sep=0,draw=white] {{\tiny 3}} -- (2,4.9);
 \draw [{Stealth[inset=2.6pt,scale=1.3]}-{Stealth[inset=2.6pt,scale=1.3]},black!50] (3,3) -- (3,5) node [circle,fill=white,inner sep=0,draw=white] {{\tiny 4}} -- (3,6.9);
  \draw [{Stealth[inset=2.6pt,scale=1.3]}-{Stealth[inset=2.6pt,scale=1.3]},black!50] (4,4) -- (4,6) node [circle,fill=white,inner sep=0,draw=white] {{\tiny 4}} -- (4,7.9);
\draw [{Stealth[inset=2.6pt,scale=1.3]}-{Stealth[inset=2.6pt,scale=1.3]},black!50] (5,5) -- (5,7) node [circle,fill=white,inner sep=0,draw=white] {{\tiny 4}} --(5,8.9);
\draw [{Stealth[inset=2.6pt,scale=1.3]}-{Stealth[inset=2.6pt,scale=1.3]},black!50] (6,6) -- (6,8.5) node [circle,fill=white,inner sep=0,draw=white] {{\tiny 5}} --(6,10.9);
\draw [{Stealth[inset=2.6pt,scale=1.3]}-{Stealth[inset=2.6pt,scale=1.3]},black!50] (7,7) -- (7,9.5) node [circle,fill=white,inner sep=0,draw=white] {{\tiny 5}} --(7,11.9);
\draw [{Stealth[inset=2.6pt,scale=1.3]}-{Stealth[inset=2.6pt,scale=1.3]},black!50] (8,8) -- (8,10.5) node [circle,fill=white,inner sep=0,draw=white] {{\tiny 5}} --(8,12.9);
\draw [{Stealth[inset=2.6pt,scale=1.3]}-{Stealth[inset=2.6pt,scale=1.3]},black!50] (9,9) -- (9,11.5) node [circle,fill=white,inner sep=0,draw=white] {{\tiny 5}} --(9,13.9);
\draw [{Stealth[inset=2.6pt,scale=1.3]}-{Stealth[inset=2.6pt,scale=1.3]},black!50] (10,10) -- (10,12.5) node [circle,fill=white,inner sep=0,draw=white] {{\tiny 5}} -- (10,14.9);
\draw [{Stealth[inset=2.6pt,scale=1.3]}-{Stealth[inset=2.6pt,scale=1.3]},black!50] (11,11) -- (11,14) node [circle,fill=white,inner sep=0,draw=white] {{\tiny 6}} -- (11,16.9);

\draw [dotted] (1,0)--(1,1);
\draw [dotted] (2,0)--(2,2);
\draw [dotted] (5,0)--(5,5);
\draw [dotted] (10,0)--(10,10);

\node at (1,-0.5) {{\small $c_2$}};
\node at (2,-0.5) {{\small $c_3$}};
\node at (5,-0.5) {{\small $c_4$}};
\node at (10,-0.5) {{\small $c_5$}};

\end{tikzpicture}
\caption{The semigroup $S_\sigma$, with $\sigma=(1,2,5,10,\dots)$.}
\label{fig:Ssig}
\end{center}
\end{figure}

\begin{lemma}
\label{la:subs}
Each $S_\sigma$ is a subsemigroup of $\mathbb{Z}\times\mathbb{Z}$.
\begin{proof} We show that $S_\sigma$ is closed under pairwise addition. To this end, let
$\mu, \nu \in S_\sigma$, with $\mu = (p, q)$, $\nu = (r, s)$. Without loss of generality we may suppose that $p \leq r$. We split the proof  that $\mu + \nu = (p+r, q+s) \in S_\sigma$ into cases, depending on the sign of $p + r$.

{\it Case 1:} $p + r \leq 0$.
In this instance, to show $(p+r,q+s) \in S_{\sigma}$, it suffices to show that $q+s \geq p+r$. This follows, as $(p,q), (r,s) \in S_{\sigma}$ implies $q \geq p$ and $s \geq r$ by construction, whence $q+s \geq p+r$.

{\it Case 2:} $p + r > 0$. 
From $p\leq r$ we have $r>0$. Let $k,l\geq 2$ be the unique numbers such that
\begin{align}
\label{eq:e1}
&c_{k-1}<r\leq c_k,\\
\label{eq:e2}
&c_{l-1}<p+r\leq c_l.
\end{align}
To show that $(p+r,q+s)\in S_\sigma$ it suffices to show that $q+s \geq p + r + l$.

If $p\leq 0$ then $p+r\leq r$, $c_l\leq c_k$ and $l\leq k$ follow in order, and then
\[q+s \geq p+r +k \geq p+r+l.\]
Suppose now that $p>0$.
Let $j\geq 2$ be the unique number such that 
\[c_{j-1} < p \leq c_{j},\]
whereby $q \geq p +j$.
We have that
\[q+s \geq p+r+j+k,\]
from which it follows that
\[
c_{l-1} \leq p+r \leq 2r\leq 2c_{k} \leq c_{k+1} \leq c_{j+k-1},
\]
using \eqref{eq:sigma}-\eqref{eq:e2} and $j-1 \geq 1$. This implies $l \leq j+k$, and so
\[q+s \geq p+r+l\]
as required, completing the proof  that $S_\sigma\leq \Z\times\Z$.\end{proof}\end{lemma}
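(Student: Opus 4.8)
The plan is to prove closure under the coordinatewise addition of $\Z\times\Z$; since that operation is associative, this is all that is needed. So fix $\mu=(p,q)$ and $\nu=(r,s)$ in $S_\sigma$ and, exploiting the symmetry of their roles, assume $p\le r$; the goal is to show $\mu+\nu=(p+r,q+s)\in S_\sigma$. Before the case analysis I would record two facts read straight off Construction \ref{cons:ssigma}. First, a membership criterion: $(a,b)\in S_\sigma$ exactly when either $a\le 0$ and $b\ge a$, or $a>0$ and $b\ge a+k$, where $k\ge 2$ is the unique index with $c_{k-1}<a\le c_k$. Second, that $(c_i)_{i\ge 2}$ is strictly increasing, since \eqref{eq:sigma} gives $c_{i+1}\ge 2c_i\ge c_i+1$; consequently an inequality $c_m\le c_n$ forces $m\le n$, which is how comparisons of ``heights'' will be turned into comparisons of indices.

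I would then split on the sign of $p+r$. If $p+r\le 0$, it suffices to check $q+s\ge p+r$, which is immediate because $q\ge p$ and $s\ge r$ hold in every case of the membership criterion. If $p+r>0$, then $r>0$ (from $2r\ge p+r$), and I write $c_{k-1}<r\le c_k$ and $c_{l-1}<p+r\le c_l$ as in \eqref{eq:e1}--\eqref{eq:e2}; now I must show $q+s\ge p+r+l$. When $p\le 0$ this is easy: $q\ge p$, $s\ge r+k$, and $p+r\le r\le c_k$ forces $l\le k$ by monotonicity, so $q+s\ge p+r+k\ge p+r+l$. The substantive case is $p>0$: here I also write $c_{j-1}<p\le c_j$, so that $q\ge p+j$ and $s\ge r+k$, whence $q+s\ge p+r+(j+k)$, and everything reduces to the single inequality $l\le j+k$.

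I expect this last inequality to be the crux, and it is the point where the hypotheses $c_2=1$ and $c_{i+1}\ge 2c_i$ are genuinely used. From $p\le r$ we get $p+r\le 2r\le 2c_k\le c_{k+1}$, and since $p>0$ forces $j\ge 2$, hence $k+1\le j+k-1$, monotonicity yields $p+r\le c_{j+k-1}$; together with $c_{l-1}<p+r$ this gives $l-1\le j+k-1$, that is, $l\le j+k$, as wanted. Substituting back finishes this case and hence the lemma. The only things needing care are the bookkeeping with the three indices $j,k,l$ and checking that the ``unique index'' descriptions in \eqref{eq:e1}--\eqref{eq:e2} make sense, which relies on $c_2=1$ at the bottom end and on $c_k\to\infty$ (a consequence of \eqref{eq:sigma}) at the top; everything else is routine inequality manipulation.
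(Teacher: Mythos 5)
Your proposal is correct and follows essentially the same route as the paper's proof: the same case split on the sign of $p+r$, the same three indices $j,k,l$, and the same chain $p+r\le 2r\le 2c_k\le c_{k+1}\le c_{j+k-1}$ to obtain $l\le j+k$. The only difference is cosmetic — you make the membership criterion and the strict monotonicity of $(c_i)$ explicit up front, which the paper leaves implicit.
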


\begin{lemma}
\label{la:subd}
Each $S_\sigma$ is a subdirect product of $\Z\times\Z$.
\end{lemma}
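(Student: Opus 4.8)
The plan is short, because Lemma~\ref{la:subs} already establishes that $S_\sigma$ is a subsemigroup of $\Z\times\Z$; what remains is to check that the two coordinate projections $\pi_1,\pi_2\colon S_\sigma\to\Z$ are onto. So I would simply unwind the definition of $S_\sigma$ and exhibit, for each target value, a point of $S_\sigma$ mapping to it.

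For $\pi_1$: if $x\leq 0$, then $(x,x)\in S_\sigma$ by the first clause of the definition (since $x\geq x$), so every non-positive integer is attained. If $x>0$, I need to find some $k\geq 2$ with $x\leq c_k$, because then $(x,x+k)$ lies in $\{(x',x'+k):x'=1,\dots,c_k\}\subseteq S_\sigma$. This is exactly where condition~\eqref{eq:sigma} enters: from $c_2=1$ and $c_{i+1}\geq 2c_i$ one gets $c_k\geq 2^{k-2}$ by induction, hence $c_k\to\infty$, so such a $k$ exists. Therefore $\pi_1(S_\sigma)=\Z$.

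For $\pi_2$: given an arbitrary $y\in\Z$, I would take the point $(\min\{y,0\},y)$. Its first coordinate is $\leq 0$, and its second coordinate is at least its first coordinate (trivially when $y\leq 0$, and because $y\geq 0\geq\min\{y,0\}$ when $y>0$), so this point lies in $S_\sigma$ by the first clause. Hence $\pi_2(S_\sigma)=\Z$, and combined with Lemma~\ref{la:subs} this shows $S_\sigma$ is a subdirect product of $\Z\times\Z$.

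I do not expect a genuine obstacle: the only substantive observation is the unboundedness of the sequence $(c_k)$, which is an immediate consequence of~\eqref{eq:sigma}; everything else is a direct reading of the definition of $S_\sigma$.
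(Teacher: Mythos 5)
Your argument is correct, and it takes a slightly different route from the paper's. The paper proves surjectivity of the two projections by summing fixed elements that lie in every $S_\sigma$: any first coordinate is reached by adding copies of $(1,3)$ and $(-1,-1)$, and any second coordinate by adding copies of $(0,1)$ and $(-1,-1)$; this implicitly leans on the closure established in Lemma \ref{la:subs}. You instead exhibit, for each target value, a \emph{single} element of $S_\sigma$ read off from Construction \ref{cons:ssigma}: $(x,x)$ or $(x,x+k)$ for the first projection, and $(\min\{y,0\},y)$ for the second. Your route needs one extra observation the paper's does not, namely that \eqref{eq:sigma} forces $c_k\geq 2^{k-2}\to\infty$ so that a suitable $k$ with $x\leq c_k$ exists; in exchange, your surjectivity argument is independent of the semigroup operation (Lemma \ref{la:subs} is used only because the definition of subdirect product requires a subsemigroup), while the paper's has the minor advantage of using only elements such as $(1,3)$, $(0,1)$, $(-1,-1)$ that are common to all $S_\sigma$ and reappear later in the non-isomorphism arguments. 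Both proofs are complete and equally short.
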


\begin{proof}
Any integer can be obtained as the first coordinate of a pair using the elements $(1, 3)$ and $(-1, -1)$, which are in $S_{\sigma}$ for every $\sigma$. The same can be done in the second coordinate using $(0, 1),(-1, -1)\in S_\sigma$. 
\end{proof}

\section{Intersection of $S_\sigma$ with some subsemigroups of $\Z\times\Z$}
\label{sec:inter}

In this section, let $(U,V)\in\{ (\Z,\Z),(\Z,\N_0),(\Z,\N),(\N_0,\N_0),(\N_0,\N)\}$. Recall from the introduction that we need only consider such $(U,V)$ to prove our Main Theorem.

 Having constructed the semigroups $S_\sigma$ in the preceding section as subsemigroups of $\mathbb{Z} \times \mathbb{Z}$, this gives us the following way of obtaining subdirect products of $U \times V$ from them.

\begin{lemma}
\label{la:insub}
The intersection $S_{\sigma} \cap (U \times V)$ is a subdirect product of $U \times V$.
\end{lemma}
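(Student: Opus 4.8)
The plan is to first observe that the subsemigroup part of the statement is free: $S_\sigma\cap(U\times V)$ is the intersection of the subsemigroup $S_\sigma\le\Z\times\Z$ (Lemma \ref{la:subs}) with the subsemigroup $U\times V\le\Z\times\Z$, hence is a subsemigroup of $U\times V$. So the only real content is surjectivity of the two coordinate projections, and for that it suffices to exhibit, for each $u\in U$, a point of $S_\sigma$ of the form $(u,v)$ with $v\in V$, and symmetrically for each $v\in V$ a point $(u,v)\in S_\sigma$ with $u\in U$.

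For the projection onto $U$, I would split on the sign of $u$. If $u\le 0$, then $(u,1)\in S_\sigma$ since $u\le 0$ and $1\ge u$; moreover $1\in V$ for every admissible $V$, and $u\in U$ (if $U=\N_0$ this forces $u=0$, which is still allowed). If $u>0$, then since $c_2=1$ and $c_{i+1}\ge 2c_i$ force $(c_k)$ to be unbounded, there is a least $k\ge 2$ with $c_k\ge u$; then $1\le u\le c_k$ gives $(u,u+k)\in S_\sigma$, and $u+k>0$ gives $u+k\in\N\subseteq V$, so $(u,u+k)\in S_\sigma\cap(U\times V)$. (Equivalently, one just picks any point sufficiently high up on the vertical line $L_u$.)

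For the projection onto $V$, recall that $V\in\{\Z,\N_0,\N\}$ in all five admissible pairs, and $V=\Z$ occurs only for $(U,V)=(\Z,\Z)$. If $v\ge 0$ then $(0,v)\in S_\sigma$ (as $0\le 0$ and $v\ge 0$), and $0\in U$ for every admissible $U$, so $(0,v)\in S_\sigma\cap(U\times V)$. The only remaining case is $v<0$, which can occur only when $V=\Z$, hence $U=\Z$; then $(v,v)\in S_\sigma$ (as $v\le 0$ and $v\ge v$) with $v\in U=\Z$, so $(v,v)\in S_\sigma\cap(U\times V)$. Together these cases show $S_\sigma\cap(U\times V)$ projects onto each of $U$ and $V$, so it is a subdirect product.

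The argument is essentially bookkeeping, and I do not expect a genuine obstacle; the one point needing care is keeping track of exactly which pairs $(U,V)$ admit negative entries, so that every witnessing point chosen above really has both coordinates lying in the prescribed semigroups.
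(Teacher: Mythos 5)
Your proposal is correct and follows essentially the same route as the paper: the subsemigroup part comes for free from Lemma \ref{la:subs}, and surjectivity of both projections is verified by exhibiting explicit points on the vertical lines (a high enough point of $L_u$ for the first coordinate, and points of the form $(0,v)$ or a diagonal point for the second). The only cosmetic difference is that for $v<0$ the paper appeals to Lemma \ref{la:subd} (noting $S_\sigma\cap(U\times V)=S_\sigma$ there), whereas you give the explicit witness $(v,v)$; both are fine.
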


\begin{proof}
First, the intersection is a subsemigroup of $U \times V$, as $U\times V$ and $S_{\sigma}$ are both subsemigroups of $\Z \times \Z$ (the latter by Lemma \ref{la:subs}). 

It then just remains to show that the projection maps onto $U$ and $V$ are surjective.
 For any $i \in U$ the line $L_{i}$ has non-empty intersection with $S_{\sigma} \cap (U \times V)$, and any element of this line has first coordinate $i$. This gives surjectivity of the first projection map.

For the second projection map, if $j \in V$ is such that $j < 0$, it must be that $U = V = \Z$, in which case $S_{\sigma} \cap (U \times V) = S_{\sigma}$, which is a subdirect product by Lemma \ref{la:subd}. 
 
If $j = 0$, then $V$ is one of $\Z$ or $\N_{0}$, and we have $(0,0) \in S_\sigma \cap (U \times V)$.

Finally, if $j > 0$, then as $L_{0} \setminus \{(0,0)\}\subseteq S_{\sigma} \cap (U \times V)$, it follows that $(0,j)\in S_{\sigma} \cap (U \times V)$.

This completes the proof of surjectivity of the second projection map, and thus of the lemma.
\end{proof}

If we can show that different sequences $\sigma$ and $\tau$ give non-isomorphic subdirect products $S_{\sigma} \cap (U \times V)$ and  $S_{\tau} \cap (U \times V)$, this will be sufficient to prove our Main Theorem.

In the following subsections, we will use the notion of \emph{indecomposability}. 
In fact, we will use this term in two different senses. 
Suppose $W$ is a subsemigroup of $\Z\times\Z$. An element $(a,b)\in W$ is \emph{semigroup indecomposable} if it cannot be written as the sum of any two elements from $W$. In case where $W$ is a monoid, i.e. where $W$ contains the element $(0,0)$, we say that $(a,b)\in W$ is \emph{monoid indecomposable} if it cannot be written as the sum of any two elements of $W\setminus\{(0,0)\}$.
Typically, we will omit the adjective `semigroup' or `monoid' when talking about indecomposability, but it will always be clear from context which one is meant.

\subsection{Intersection with \boldmath{$\N_{0} \times \N$}}
\label{subs:1}
We will start with the case where $U \times V = \N_{0}\times \N$. The semigroup $ S_{\sigma} \cap (\N_{0} \times \N) $ is  just the union of the lines $\{L_{i} : i \geq 0\}$ from $S_{\sigma}$, but without the identity $(0,0)$. Recall that the lowest point of a line $L_{i}$ is denoted $(i,l_{i})$.

We describe the indecomposables of $S_{\sigma} \cap (\N_{0} \times \N)$ in the following lemma, which will be useful in ruling out possible isomorphisms between these semigroups.

\begin{lemma}\thlabel{lem:indecn0xn}
The set of indecomposable elements of $S_{\sigma} \cap (\N_{0} \times \N)$ is exactly the set
\[ \{(0,1)\}\cup\{(i,l_{i}) : i \geq 1 \}.\]
\end{lemma}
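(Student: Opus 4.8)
The plan is to show two inclusions: every element in the proposed set is indecomposable, and every element outside it is decomposable.

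For the first direction, I would take $(0,1)$ and note that any way of writing it as a sum $(a,b)+(c,d)$ with both summands in $S_\sigma\cap(\N_0\times\N)$ forces $a=c=0$, hence both summands lie on $L_0\setminus\{(0,0)\}=\{(0,x):x\geq 1\}$, so $b,d\geq 1$ and $b+d\geq 2>1$, a contradiction. For a lowest point $(i,l_i)$ with $i\geq 1$, suppose $(i,l_i)=(a,b)+(c,d)$ with both summands in the semigroup. Since $i\geq 1$ and the first coordinates are non-negative and sum to $i$, at least one summand, say $(a,b)$, has $a\geq 1$; and then $(c,d)$ has $c=i-a$ with $0\leq c<i$. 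The key point is to use the defining inequality $c_{i+1}\geq 2c_i$ from \eqref{eq:sigma}, which I expect translates into the statement that the lowest points grow slowly enough that $l_a+l_c>l_i$ (or $l_a+d>l_i$ when $c=0$, using $d\geq 1$ and $l_a\geq a+2$ together with $l_i\le i+k$ for the appropriate $k$). Concretely, writing $l_i=i+k$ where $c_{k-1}<i\le c_k$, and similarly $l_a=a+k'$, $l_c=c+k''$, one has $b\ge l_a$ and $d\ge l_c$, so $b+d\ge (a+c)+k'+k''=i+k'+k''$, and it remains to check $k'+k''>k$; this should follow because $i=a+c\le 2\max(a,c)\le 2c_{\max(k',k'')}\le c_{\max(k',k'')+1}$, forcing $k\le\max(k',k'')+1\le k'+k''$ (using $\min(k',k'')\ge 2$, hence $\ge 1$), with the strict inequality coming from $k'+k''\ge k'+2>k'+1\ge\max(k',k'')+1\ge k$ — essentially the same arithmetic as in the proof of Lemma \ref{la:subs}, Case 2.

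For the converse direction, I would take any element $(i,y)\in S_\sigma\cap(\N_0\times\N)$ not in the proposed set and exhibit a decomposition. If $i=0$ then $y\geq 2$ (since $(0,1)$ is excluded and $(0,0)\notin$ the semigroup), and $(0,y)=(0,1)+(0,y-1)$ works, both summands being in $L_0\setminus\{(0,0)\}$. If $i\geq 1$ then $y>l_i$ (since $(i,l_i)$ is excluded and $y\ge l_i$), so $(i,y)=(i,l_i)+(0,y-l_i)$ with $y-l_i\geq 1$, and $(0,y-l_i)\in L_0\setminus\{(0,0)\}\subseteq S_\sigma\cap(\N_0\times\N)$; alternatively one could write $(i,y)=(0,1)+(i,y-1)$ once $y-1\ge l_i$. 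This direction is routine.

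I expect the main obstacle to be the first direction for the lowest points $(i,l_i)$: one must carefully verify the arithmetic inequality $k'+k''>k$ (equivalently $l_a+l_c>l_i$, or $l_a+d>l_i$ in the degenerate case $c=0$) in all sub-cases — in particular when one summand sits on an unbounded line $L_0$, and when both summands have positive first coordinate. The condition $c_{i+1}\ge 2c_i$ is exactly what makes this work, and the bookkeeping mirrors the closure proof, so I would organize it as a short lemma-internal case split: (a) $c=0$, using $l_a\ge a+2$ and $d\ge 1$; (b) $a,c\ge 1$, using the $2\max$-argument above.
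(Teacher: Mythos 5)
Your proposal is correct and follows essentially the same route as the paper: decomposables are exhibited via $(i,j)=(i,l_i)+(j-l_i)(0,1)$, and indecomposability of the lowest points is proved by comparing $l_i=i+k$ with $b+d\geq i+k'+k''$ and using the doubling condition \eqref{eq:sigma}. The only cosmetic differences are that you treat the case of a summand with first coordinate $0$ as a separate (trivial) case rather than excluding it outright, and you phrase the final contradiction as $k\leq\max(k',k'')+1<k'+k''$ instead of the paper's $i\leq c_{k'}+c_{k''}\leq c_{k'+k''-1}\leq c_{k-1}$, which is the same arithmetic in a different guise.
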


\begin{proof}
As
\[(i,j) = (i,l_{i}) + (j-l_{i})(0,1)\]
 for all $(i,j) \in  S_{\sigma} \cap (\N_{0} \times \N) $, then any element which is not the lowest point of its line is decomposable. Hence it remains to show that $(0,1)$ and the lowest points of each line are indecomposable in $ S_{\sigma} \cap (\N_{0} \times \N) $.

Firstly, $(0,1)$ is indecomposable as $1$ is indecomposable in $\N$. 

Now suppose that some element $(i, l_i)$ for $i\in\N$ is decomposable, say
\begin{equation}\label{eq:2.2}
(i, l_i) = (j, q) + (k, r)
\end{equation}
for some $(j, q), (k, r) \in S_{\sigma}\cap(\N_0\times\N)$.

Note that we cannot have $j = 0$ or $k = 0$ as that would contradict $(i, l_i)$ being the lowest point of the line $L_i$; thus $j, k \geq 1$. Now let $x, y, z \geq 2$ be the smallest possible satisfying
\begin{enumerate}[label=(\roman*)]
\item\label{2.2list1} $i \leq c_x$, so that $l_i = i + x$;
\item\label{2.2list2} $j \leq c_y$, so that $l_j = j + y \leq q$;
\item\label{2.2list3} $k \leq c_z$, so that $l_k = k + z \leq r$.
\end{enumerate}

From, (\ref{eq:2.2}), \ref{2.2list1}, \ref{2.2list2} and \ref{2.2list3}, we have:
\[ j+k+x = i+x = l_i = q+r \geq l_j + l_k = j+k+y+z,\]
and hence $x \geq y + z$. Recalling that $c_{n+1} \geq 2c_n$ for all $n \geq 2$, we have that $c_{y+z-1} \geq 2^{z-1}c_y$ and $c_{y+z-1} \geq 2^{y-1}c_z$. Using this, together with $y, z \geq 2$ and \ref{2.2list2} and \ref{2.2list3}, we have:
\[ i = j+k \leq c_y + c_z \leq \bigl(\frac{1}{2^{z-1}} + \frac{1}{2^{y-1}}\bigr) c_{y+z-1} \leq c_{y+z-1} \leq c_{x-1} \]
a contradiction with minimality of $x$ with respect to \ref{2.2list1}. Hence, the elements of the form $(i, l_i)$ are all indecomposable.
\end{proof}

We can now prove the main result of this section -- that there are continuum many subdirect products of $\N_{0} \times \N$ up to isomorphism.

\begin{prop}\label{prop:nonisomn0xn} For any two sequences $\sigma$ and $\tau$ satisfying the conditions of Construction \ref{cons:ssigma}, we have that
\[\sigma \not = \tau \,\Rightarrow\,  S_{\sigma} \cap (\N_{0} \times \N) \not \cong  S_{\tau} \cap (\N_{0} \times \N).\]
Consequently, there are continuum many subdirect products of $\N_{0} \times \N$ up to isomorphism.
\end{prop}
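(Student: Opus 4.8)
The plan is to reconstruct each sequence $\sigma$ from the isomorphism type of the semigroup $S_\sigma \cap (\N_0 \times \N)$, so that an isomorphism between $S_\sigma \cap (\N_0 \times \N)$ and $S_\tau \cap (\N_0 \times \N)$ forces $\sigma = \tau$. The raw material is \thref{lem:indecn0xn}: the indecomposables are $(0,1)$ together with the lowest points $(i,l_i)$ for $i \geq 1$, and any isomorphism must permute these. First I would isolate $(0,1)$ from the other indecomposables by an algebraic property invariant under isomorphism. The natural candidate: $(0,1)$ is the unique indecomposable $w$ such that every element of the semigroup can be written as $w' + nw$ for some indecomposable $w'$ and some $n \geq 0$ (equivalently, $(0,1)$ generates ``vertical'' translation on every line, and no $(i,l_i)$ with $i \geq 1$ has this property since adding $(i,l_i)$ to anything increases the first coordinate). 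Hence any isomorphism $\phi$ fixes $(0,1)$, and consequently maps each line $L_i \setminus \{(0,0)\}$ (which is exactly $\{(i,l_i) + n(0,1) : n \geq 0\}$, the set of elements reachable from the indecomposable lowest point by repeatedly adding $(0,1)$) onto another such line; so $\phi$ induces a bijection $i \mapsto i'$ on $\{0,1,2,\dots\}$ with $\phi((i,l_i)) = (i', l_{i'})$.

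Next I would pin down the first-coordinate behaviour. Adding a point of $L_i$ and a point of $L_j$ lands in $L_{i+j}$, so the induced bijection on the index set $\{0,1,2,\dots\}$ is an automorphism of the additive monoid $(\N_0, +)$; since that monoid is rigid, the bijection is the identity, i.e. $\phi((i,l_i)) = (i, l_{\phi,i})$ where the new lowest point lies on the same vertical line. (Alternatively, and more robustly, one argues that $(1,l_1) = (1,3)$ is the unique indecomposable whose first coordinate is minimal among $\{(i,l_i):i\ge 1\}$ once one knows first coordinates are preserved up to the monoid automorphism, and proceeds inductively.) What remains is to recover the numbers $l_i$, hence the $c_k$: I would show that the sequence $(l_i)_{i \geq 1}$ is itself determined by the isomorphism type. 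Since $\phi$ fixes $(0,1)$ and permutes lines trivially in the first coordinate, and since $(i,l_i) + m(0,1) = (i, l_i + m)$ must map to an element of $L_i$ of the form $(i, l_i^\tau + m)$, comparing the ``heights above the lowest point'' forces $l_i^\sigma = l_i^\tau$ for all $i$; because $c_k = \max\{i : l_i = i+k\}$, this yields $\sigma = \tau$.

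The main obstacle I anticipate is the very first step: giving a clean, isomorphism-invariant characterisation that singles out $(0,1)$ among all the indecomposables. The danger is that some lowest point $(i,l_i)$ with $i \geq 1$ might accidentally share whatever property one picks — for instance, one must check that no $(i,l_i)$ acts as a ``translation direction'' for a cofinite part of the semigroup, and that the slopes/growth rates genuinely distinguish $(0,1)$ (first coordinate $0$) from every other indecomposable (first coordinate $\geq 1$). Once $(0,1)$ is fixed, everything else is bookkeeping: the lines become intrinsic, the first-coordinate grading is forced by rigidity of $(\N_0,+)$, and the lowest points are read off.

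For the final sentence of the statement: the set of sequences $\sigma$ satisfying the conditions of Construction~\ref{cons:ssigma} has cardinality continuum — e.g. fixing $c_2 = 1$ and choosing, for each $i \geq 2$, whether $c_{i+1} = 2c_i$ or $c_{i+1} = 2c_i + 1$ already yields $2^{\aleph_0}$ admissible sequences — and by the implication just proved these give pairwise non-isomorphic subdirect products $S_\sigma \cap (\N_0 \times \N)$ of $\N_0 \times \N$. Since there are also at most continuum many countable semigroups up to isomorphism, the count is exactly continuum.
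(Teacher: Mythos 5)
Your strategy is essentially the paper's: use Lemma \ref{lem:indecn0xn}, show the isomorphism $\varphi$ fixes $(0,1)$, deduce that it maps lines to lines and induces the identity on the first coordinates, and then recover the sequence. Your two micro-variations are fine: your characterisation of $(0,1)$ (the unique indecomposable $w$ such that every element is an indecomposable plus a multiple of $w$) works and is isomorphism-invariant, where the paper instead uses that $(0,1)+(0,1)$ is the only doubled indecomposable with a unique decomposition into indecomposables; and your rigidity-of-$(\N_0,+)$ argument correctly replaces the paper's step of forcing $\varphi(1,3)\in L_1$ via surjectivity on indecomposables.

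The final step, however, has a genuine gap. What you have established -- $\varphi(0,1)=(0,1)$, lines to lines with the same index, lowest points to lowest points -- yields only $\varphi(i, l_i^\sigma + m) = (i, l_i^\tau + m)$ for all $m\geq 0$ (writing $l_i^\sigma$, $l_i^\tau$ for the respective lowest heights). ``Comparing the heights above the lowest point'' is then vacuous: that map preserves heights above lowest points for \emph{any} pair $\sigma,\tau$, and as a bijection it exists whether or not $l_i^\sigma = l_i^\tau$; only the homomorphism property linking \emph{different} lines can pin down the absolute heights. So $l_i^\sigma = l_i^\tau$ does not follow from the comparison you describe. The missing (one-line) argument, which is exactly how the paper finishes: since $c_2=1$ for every admissible sequence, $l_1=3$ in both semigroups, so $\varphi(1,3)=(1,3)$; hence $\varphi(n,3n)=n\varphi(1,3)=(n,3n)$, while $(n,3n)=(n,l_n^\sigma)+(3n-l_n^\sigma)(0,1)$ gives $\varphi(n,3n)=(n, l_n^\tau + 3n - l_n^\sigma)$, whence $l_n^\sigma = l_n^\tau$ for all $n$ and therefore $\sigma=\tau$. (Equivalently: additivity of $\varphi$ forces $d_i := l_i^\tau - l_i^\sigma$ to satisfy $d_{i+j}=d_i+d_j$, and $d_1=0$.) Your closing cardinality count is correct.
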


\begin{proof}
We will prove the contrapositive. So suppose two subdirect products $ S_{\sigma} \cap (\N_{0} \times \N) $ and $ S_{\tau} \cap (\N_{0} \times \N) $ are isomorphic, and let $\varphi$ be an isomorphism between them.

This isomorphism must map the indecomposable elements of $ S_{\sigma} \cap (\N_{0} \times \N) $ bijectively onto indecomposable elements of $ S_{\tau} \cap (\N_{0} \times \N) $.

Any indecomposable $(i,l_{i})$ in either semigroup has the property that $(i,l_{i})+(i,l_{i})$ has more than one decomposition into a sum of indecomposable elements, as
\[(i,l_{i}) + (i,l_{i}) = (2i,l_{2i}) + (2l_{i} - l_{2i})(0,1). \] 
By way of contrast,  $(0,1) + (0,1)$ has only that one decomposition into a sum of indecomposables. Hence it must be that $\varphi(0,1) = (0,1)$.

Now consider the image of the indecomposable $(1,3)$, say $\varphi(1,3) = (j,l_{j})$ for some $j \in \N$. Then for any $n \in \N$, it must be that 
\[(nj,nl_{j}) = \varphi(n,3n) = \varphi\bigl((n, l_n) + (3n - l_n)(0,1)\bigr) = \varphi(n, l_n) + (0,3n-l_n). \]

It follows that $\varphi(n,l_n)$ belongs to the line $L_{nj}$.
Furthermore, since it must be indecomposable, we have
 \begin{equation}\varphi(n,l_{n}) = (nj,l_{nj}).\label{eq:indecompim}\end{equation} 
 For $\varphi$ to be surjective on the set of indecomposables, it must be that $j=1$. 
It follows that $\varphi$ is
 the identity mapping, 
 since $S_\sigma\cap (\N_0\times \N)$ is generated by its indecomposable elements. 
Therefore $S_{\sigma} \cap (\N_{0} \times \N) = S_{\tau} \cap (\N_{0} \times \N)$, and hence $\sigma = \tau$, proving the result.\end{proof}

\subsection{Intersection with \boldmath{$\N_{0} \times \N_{0}$}}
\label{subs:2}
We now consider the case where $U\times V = \N_{0}\times\N_{0}$. The semigroup $S_{\sigma} \cap (\N_{0} \times \N_{0})$ is the union of lines $\{L_{i} : i \geq 0\} $ from $S_{\sigma}$. In fact, these semigroups are simply the semigroups $S_{\sigma} \cap (\N_0\times\N)$ with the identity element $(0,0)$ adjoined. Therefore, as an immediate consequence of Proposition \ref{prop:nonisomn0xn} we have
\begin{prop}
 $\N_0 \times \N_0$ has continuum many subdirect products up to isomorphism.
\end{prop}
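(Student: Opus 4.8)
The plan is to leverage Proposition \ref{prop:nonisomn0xn} essentially verbatim, since $S_\sigma\cap(\N_0\times\N_0)$ differs from $S_\sigma\cap(\N_0\times\N)$ only by the adjoined zero element $(0,0)$. The key observation is that adjoining an identity to a semigroup is a functorial operation: if $T_1\cong T_2$ then $T_1^1\cong T_2^1$, and conversely an isomorphism $T_1^1\to T_2^1$ must fix the identity (it is the unique element acting as a two-sided identity, a property preserved by isomorphisms) and hence restrict to an isomorphism $T_1\to T_2$. Applying this with $T_i=S_{\sigma_i}\cap(\N_0\times\N)$ and noting $T_i^1=S_{\sigma_i}\cap(\N_0\times\N_0)$ via the identification $(0,0)\mapsto 1$, we conclude $S_\sigma\cap(\N_0\times\N_0)\cong S_\tau\cap(\N_0\times\N_0)$ implies $S_\sigma\cap(\N_0\times\N)\cong S_\tau\cap(\N_0\times\N)$, whence $\sigma=\tau$ by Proposition \ref{prop:nonisomn0xn}.

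Concretely, I would first record the (routine) fact that $S_\sigma\cap(\N_0\times\N_0) = \bigl(S_\sigma\cap(\N_0\times\N)\bigr)\cup\{(0,0)\}$ and that $(0,0)$ is the identity of this monoid. Second, I would observe that any isomorphism $\psi\colon S_\sigma\cap(\N_0\times\N_0)\to S_\tau\cap(\N_0\times\N_0)$ satisfies $\psi(0,0)=(0,0)$, because $(0,0)$ is characterised purely equationally as the element $e$ with $ex=xe=x$ for all $x$. Third, $\psi$ therefore maps $S_\sigma\cap(\N_0\times\N_0)\setminus\{(0,0)\} = S_\sigma\cap(\N_0\times\N)$ isomorphically onto $S_\tau\cap(\N_0\times\N)$. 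Fourth, invoke Proposition \ref{prop:nonisomn0xn} to deduce $\sigma=\tau$; this proves the analogue of Proposition \ref{prop:nonisomn0xn} for $\N_0\times\N_0$, and since there are continuum many admissible sequences $\sigma$ (as noted in the introduction and guaranteed by the mildness of condition \eqref{eq:sigma}), there are continuum many pairwise non-isomorphic subdirect products.

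There is no real obstacle here — the argument is a short soft-algebra reduction, and the excerpt already flags it as "an immediate consequence". The only point requiring the tiniest care is confirming that removing the identity from $S_\sigma\cap(\N_0\times\N_0)$ genuinely returns a \emph{subdirect product} of $\N_0\times\N$ rather than just an abstract semigroup — but this is exactly Lemma \ref{la:insub}, and in any case the non-isomorphism conclusion is an abstract-semigroup statement, so the subdirect-product structure is only needed to know these objects live inside $\N_0\times\N_0$ in the first place, which is again Lemma \ref{la:insub}. Thus the proof as given in the excerpt ("as an immediate consequence of Proposition \ref{prop:nonisomn0xn}") is complete, and I would at most add the one-sentence justification that isomorphisms of monoids preserve the identity and hence restrict to isomorphisms of the identity-free parts.
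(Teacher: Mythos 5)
Your argument is correct and is essentially the paper's own: the paper likewise observes that $S_\sigma\cap(\N_0\times\N_0)$ is just $S_\sigma\cap(\N_0\times\N)$ with the identity $(0,0)$ adjoined and deduces the result as an immediate consequence of Proposition \ref{prop:nonisomn0xn}. Your added remark that any isomorphism must fix the unique identity and hence restrict to an isomorphism of the identity-free parts is exactly the routine justification the paper leaves implicit.
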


\subsection{Intersection with \boldmath{$\Z \times \N$}}
\label{subs:3}
Considering the case where $U\times V = \Z \times \N$, we have
\[
S_\sigma\cap (\Z\times\N)=\{ (i,j)\::\: i\leq 0,\ j\geq 1\} \cup \bigcup_{i\geq 1} L_i.
\]

We describe the indecomposable elements of $S_\sigma \cap (\Z \times \N)$ in the following lemma, which is again used to rule out non-identity
isomorphisms between these semigroups.

\begin{lemma}\label{lem:indeczxn}
The set of indecomposable elements of $S_\sigma \cap (\Z\times\N)$ is exactly the set\[ \{(i,l_{i}) : i \geq 1 \}\cup\{(i, 1) : i \leq 0\}.\]
\end{lemma}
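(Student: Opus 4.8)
The plan is to mirror the structure of the proof of Lemma~\ref{lem:indecn0xn}, adapting it to account for the extra elements $(i,1)$ with $i\le 0$ coming from the left half-plane, and for the fact that $S_\sigma\cap(\Z\times\N)$ is now generated (as a semigroup) by $(-1,?)$-type elements together with the lowest points. First I would establish that every claimed element is indeed indecomposable, and then that nothing else is.

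\textbf{The claimed elements are indecomposable.} For a lowest point $(i,l_i)$ with $i\ge 1$: any decomposition $(i,l_i)=(j,q)+(k,r)$ in $S_\sigma\cap(\Z\times\N)$ forces $q,r\ge 1$, hence $q+r=l_i\ge 2$; and since $i\ge 1$ and $j+k=i$, at least one of $j,k$ is positive. If one summand, say $(j,q)$, has $j\le 0$, then $q\ge j+l_i-0$\dots more carefully: $(k,r)$ then has $k=i-j\ge i\ge 1$ and $r\le l_i-1$, but $r\ge l_k\ge k+2\ge i+2>l_i$ (using $l_i=i+x$ with $x\ge 2$ minimal and $k\ge i$ giving $l_k\ge l_i$), a contradiction; similarly one rules out $j\le 0$. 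If both $j,k\ge 1$ the argument is verbatim that of Lemma~\ref{lem:indecn0xn}, using $c_{i+1}\ge 2c_i$ to contradict minimality of $x$. For $(i,1)$ with $i\le 0$: a decomposition would need both summands to have second coordinate $\ge 1$ summing to $1$, which is impossible since second coordinates are positive integers.

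\textbf{Nothing else is indecomposable.} If $(i,j)\in S_\sigma\cap(\Z\times\N)$ is not on the list, split into cases. If $i\ge 1$ and $j>l_i$, then $(i,j)=(i,l_i)+(0,j-l_i)$ with $(0,j-l_i)\in S_\sigma\cap(\Z\times\N)$ since $j-l_i\ge 1$; so it is decomposable. If $i=0$ and $j\ge 2$, then $(0,j)=(0,1)+(0,j-1)$. If $i\le -1$, then $(i,j)$ with $j\ge 1$: write $(i,j)=(i+1,j)+(-1,\text{?})$ — but $(-1,0)\notin\N$, so instead use $(i,j)=(i-1,j-1)+(1,1)$? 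No: $(1,1)\notin S_\sigma$. The cleanest choice is $(i,j)=(i+m,j+m')+\dots$; concretely, for $i\le -1$ and $j\ge 1$, if $j\ge 2$ write $(i,j)=(i,1)+(0,j-1)$ — note $(0,j-1)\in S_\sigma\cap(\Z\times\N)$ as $j-1\ge 1$, and $(i,1)\in S_\sigma$ as $1\ge i$; if $j=1$, write $(i,1)=(i-1,1)+(1,?)$ fails, so instead $(i,1)=(i+1,2)+(-1,-1)$? That needs $(-1,-1)$, which has second coordinate $-1\notin\N$. The genuinely working decomposition for $(i,1)$, $i\le -1$: take $(i,1)=(2i,2)+(-i,-1)$? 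Again negative. The correct move: $(i,1)=(i-1,3)+(1,\,-2)$ — no. I will instead observe $(i,1)=(-1,1)+(i+1,0)+\cdots$ only works if $i+1\le 0$ gives second coord $\ge 0$; one needs $(i+1,j')\in S_\sigma\cap(\Z\times\N)$, i.e. $j'\ge 1$. So $(i,1)=(i+1,1)+(-1,0)$ fails. Hence the honest statement is that $(i,1)$ for $i\le 0$ \emph{is} on the indecomposable list and requires no decomposition; only elements $(i,j)$ with $i\le 0,\ j\ge 2$, or $i\ge 1,\ j>l_i$, need to be decomposed, and these are handled by the first two cases above.

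\textbf{Expected main obstacle.} The delicate point is the case $i\ge 1$ of the lowest point $(i,l_i)$ where one summand lies in the left half-plane ($j\le 0$): one must check the monotonicity $k\ge i \Rightarrow l_k\ge l_i$, which follows since the minimal $x$ with $i\le c_x$ is non-decreasing in $i$. Everything else is a transcription of the already-proven $\N_0\times\N$ lemma; the bookkeeping of which elements genuinely need a decomposition (and producing a valid one inside $\Z\times\N$, where second coordinates are forced to be $\ge 1$) is the only part demanding care.
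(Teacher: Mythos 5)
Your proposal is correct and takes essentially the same route as the paper: explicit decompositions $(i,j)=(i,l_i)+(0,j-l_i)$ and $(i,j)=(i,1)+(0,j-1)$ for the non-listed elements, indecomposability of $(i,1)$ from the indecomposability of $1$ in $\N$, an appeal to Lemma~\ref{lem:indecn0xn} when both summands lie in $\N_0\times\N$, and, when one summand has nonpositive first coordinate, the monotonicity $k\ge i\Rightarrow l_k\ge l_i$ played against $r\le l_i-1$. The one blemish is the written chain $r\ge l_k\ge k+2\ge i+2>l_i$, which is false as stated (since $l_i\ge i+2$); but your parenthetical correction, namely $l_k\ge l_i$ combined with $r\le l_i-1$, is exactly the valid argument and matches the paper's, so this is a notational slip rather than a gap.
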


\begin{proof} 
Notice that
\[
(i,j)=
\begin{cases} (i, l_i) + (j - l_i)(0, 1) & \text{when } i\geq 1,\ j>l_i\\
(i, 1) + (j - 1)(0, 1)& \text{when } i\leq 0,\ j>1.
\end{cases}
\]
Thus all of these elements are decomposable.

The elements $(i, 1)$  for $i \leq 0$ are indecomposable in $\Z\times\N$ , as $1$ is indecomposable in $\N$. 
It remains to consider $(i,l_{i})$  where $i \geq 1$. Suppose that $(i, l_i)$ is decomposable, say $(i, l_i) = (a, x) + (b, y)$. We cannot have $a, b \geq 0$ by \thref{lem:indecn0xn}. Without loss of generality, suppose $a < 0$. Then $b = i - a > i$, and hence \[y + x > y \geq l_b = b + c_b > i + c_i = l_i,\]
a contradiction.
\end{proof}

We can now move on to proving that there are continuum many subdirect products of $\Z\times\N$ up to isomorphism.

\begin{prop}
\label{prop:zxn}
For any two sequences $\sigma$ and $\tau$ satisfying the conditions of  Construction \ref{cons:ssigma}, we have that 
\[\sigma \not = \tau \,\Rightarrow\,  S_{\sigma} \cap (\Z \times \N) \not \cong  S_{\tau} \cap (\Z \times \N).\]
Consequently, there are continuum many subdirect products of $\Z\times\N$ up to isomorphism.
\end{prop}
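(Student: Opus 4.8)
The plan is to prove the contrapositive, mimicking the structure of the proof of Proposition \ref{prop:nonisomn0xn}: assume $\varphi\colon S_\sigma\cap(\Z\times\N)\to S_\tau\cap(\Z\times\N)$ is an isomorphism, show $\varphi$ must be the identity, and conclude $\sigma=\tau$. The new feature compared with the $\N_0\times\N$ case is that there are now infinitely many indecomposables $(i,1)$ with $i\le 0$, so the first task is to pin down where these go, and only then deal with the indecomposables $(i,l_i)$, $i\ge 1$.

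First I would separate the two families of indecomposables listed in Lemma \ref{lem:indeczxn} by an algebraic invariant preserved by $\varphi$. The natural candidate is divisibility/cancellation behaviour: for $i\le 0$ the element $(i,1)$ is the bottom of an "infinite downward" line in the sense that $(i,1)=(i-1,1)+(1,?)$-type relations let you subtract arbitrarily far to the left, whereas $(i,l_i)$ with $i\ge 1$ sits at the bottom of a line that cannot be pushed left past $0$. Concretely, I expect the clean invariant to be: $(i,1)$ with $i\le 0$ is a summand of infinitely many indecomposables (e.g. $(i,1)+(0,1)=(i,2)$ is decomposable but $(i,1)+(j,1)$ for $j\le 0$ hits other points of the "flat" region), while for the $(i,l_i)$, $i\ge1$, adding $(0,1)$ repeatedly never returns to the flat region $\{(i,j):i\le0\}$. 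Turning this into a statement purely about sums of indecomposables — the only structure $\varphi$ sees a priori — is the delicate part; one likely formulation is that $x$ lies in the flat region iff $x+(0,1)+\dots+(0,1)$ ($m$ times) remains expressible using an indecomposable with nonpositive first coordinate for all $m$, or alternatively that $x$ can be written as $y+z$ with $y,z$ indecomposable and $y$ again of flat type — I would look for whichever of these is cleanest to verify.

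Once the two families are shown to be preserved setwise, I would argue as follows. The element $(0,1)$ is again distinguished among all indecomposables: $(0,1)+(0,1)=(0,2)$ has a unique decomposition into indecomposables, so (after checking no other indecomposable has that property, using the $c_{i+1}\ge 2c_i$ growth condition exactly as before for the $i\ge1$ side, and a direct check on the flat side) we get $\varphi(0,1)=(0,1)$. Then, as in Proposition \ref{prop:nonisomn0xn}, consider $\varphi(1,3)=(j,l_j)$ with $j\ge 1$ (it must land in the $i\ge1$ family since it is not flat), use $(n,3n)=(n,l_n)+(3n-l_n)(0,1)$ to deduce $\varphi(n,l_n)=(nj,l_{nj})$, and surjectivity onto the $i\ge1$ indecomposables forces $j=1$. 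Finally, for the flat indecomposables, $\varphi(-1,-1)$ must be a flat indecomposable $(-m,1)$; but using that $\varphi$ fixes $(0,1)$ and that $(-1,-1)+(1,3)=(0,2)=2(0,1)$ while $\varphi(1,3)$ will already be forced to be $(1,3)$ once $j=1$ and the line-by-line argument is run, we get $\varphi(-1,-1)=(-1,-1)$, and hence $\varphi$ fixes the generating set $\{(0,1),(1,3),(-1,-1)\}$ of $S_\sigma\cap(\Z\times\N)$, so $\varphi$ is the identity. Therefore $S_\sigma\cap(\Z\times\N)=S_\tau\cap(\Z\times\N)$ and $\sigma=\tau$. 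The "Consequently" clause then follows since the set of admissible $\sigma$ has cardinality continuum.

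The main obstacle I anticipate is the first step: finding a first-order (or at least purely additive-combinatorial) property that cleanly separates $\{(i,1):i\le0\}$ from $\{(i,l_i):i\ge1\}$ among the indecomposables, given that an isomorphism only respects the semigroup operation and hence a priori only respects "being indecomposable" and sums thereof. I would be prepared for this to require a small auxiliary lemma — perhaps phrased in terms of which indecomposables can appear as the "left-most" summand in decompositions of a fixed element, or in terms of the semigroup generated by a single indecomposable together with $(0,1)$ — rather than a one-line observation. Everything after that should run closely parallel to the already-established $\N_0\times\N$ argument, with the growth condition \eqref{eq:sigma} doing the same work.
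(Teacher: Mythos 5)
Your overall skeleton (show $\varphi$ must fix everything, reduce to Proposition \ref{prop:nonisomn0xn}, conclude $\sigma=\tau$) matches the paper's, and you correctly identify the new difficulty: the two families of indecomposables from Lemma \ref{lem:indeczxn}, namely $\{(i,l_i):i\ge 1\}$ and $\{(i,1):i\le 0\}$, could a priori be interchanged by an isomorphism. But your proposal does not actually resolve this, and it is not the routine ``small auxiliary lemma'' you hope for. The candidate invariants you float do not work as stated: one refers to ``an indecomposable with nonpositive first coordinate'', which is exactly the coordinate-based notion you are trying to characterise abstractly, and the other (``$x=y+z$ with $y,z$ indecomposable and $y$ of flat type'') is both circular and vacuous, since a sum of two elements is by definition decomposable. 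More importantly, no soft invariant can settle this without invoking the growth condition: the paper's route is to prove $\varphi(0,1)=(0,1)$, then show $\varphi(i,l_i)$ lies in the line $L_{mi}$ and $\varphi(-i,1)$ in $L_{ni}$, where $m,n$ are the first coordinates of $\varphi(1,3)$ and $\varphi(-1,1)$; surjectivity on indecomposables forces $\{m,n\}=\{-1,1\}$, and the ``swap'' case $\varphi(1,3)=(-1,1)$, $\varphi(-1,1)=(1,3)$ is then refuted by computing $\varphi(-n,n)$ in two ways, which forces $l_n=2n+1$ for all $n$, contradicting the at-least-exponential growth imposed by \eqref{eq:sigma}. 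That computation is the substantive content of the proposition beyond the $\N_0\times\N$ case, and it is precisely the step your proposal leaves open.

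Two further points in your endgame need repair. First, $(-1,-1)$ is not an element of $S_\sigma\cap(\Z\times\N)$ (second coordinates are positive); you presumably mean $(-1,1)$, and the relation to use is $(-1,1)+(1,3)=4(0,1)$ together with cancellativity in $\Z\times\Z$. Second, $\{(0,1),(1,3),(-1,1)\}$ is not a generating set of $S_\sigma\cap(\Z\times\N)$: the semigroup is generated only by its full (infinite) set of indecomposables, e.g.\ $(c_3,c_3+3)$ is not a sum of those three elements. So ``$\varphi$ fixes a finite generating set, hence is the identity'' does not go through; instead one fixes each indecomposable separately, via $\varphi(n,l_n)=(n,l_n)$ from the relation $(n,3n)=(n,l_n)+(3n-l_n)(0,1)$ (as you do), and analogously $\varphi(-n,1)=(-n,1)$ from $(-n,n)=(-n,1)+(n-1)(0,1)$, which is how the paper concludes that $\varphi$ maps $S_\sigma\cap(\N_0\times\N)$ onto $S_\tau\cap(\N_0\times\N)$ before citing Proposition \ref{prop:nonisomn0xn}.
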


\begin{proof} Suppose that $\varphi : S_{\sigma} \cap (\Z\times\N) \rightarrow S_{\tau} \cap (\Z\times\N)$ is an isomorphism. We proceed via a sequence of claims, aiming to show that $\varphi(S_{\sigma} \cap (\N_0 \times \N)) = S_{\tau} \cap (\N_0 \times \N)$ and then use Proposition \ref{prop:nonisomn0xn} to obtain $\sigma=\tau$.

\begin{claim}
\label{cl:zxn1}
$\varphi(0, 1) = (0, 1)$. 
\end{claim}

\begin{proof}
We claim that $(0,1)$ is the only indecomposable element $(x,y)$ such that $(x,y)+(x,y)$ cannot be expressed as a sum of indecomposables in any other way, and the assertion then follows from this.
That $(0,1)$ has this property follows from Lemma \ref{lem:indeczxn}.
For any other indecomposable we have
\[
(i,l_i)+(i,l_i)=(2i,l_{2i})+(2l_i-l_{2i})(0,1),
\]
an alternative decomposition as a sum of indecomposables.
\end{proof}

\begin{claim}\label{cl:zxn2} $\varphi(i,l_{i}) \in L_{p}$ for each $i \geq 1$, where $p$ is $i$ times the first coordinate of $\varphi(1,3)$.\end{claim}

\begin{proof}
As $\varphi(i,3i) = \varphi((i,l_{i}) + (3i-l_{i})(0,1))$, then
\[i\varphi(1,3) = \varphi(i,l_{i}) + (3i-l_{i})(0,1)\]
by Claim \ref{cl:zxn1}, and hence $ \varphi(i,l_{i})$ and $i\varphi(1,3)$ must have the same first coordinate.
\end{proof}

\begin{claim}\label{cl:zxn3} $\varphi(-i,1) \in L_{q}$ for each $i \geq 0$, where $q$ is $i$ times the first coordinate of $\varphi(-1,1)$.\end{claim}

\begin{proof}
As $\varphi(-i,i) = \varphi((-i,1) + (i-1)(0,1))$, then
\[i\varphi(-1,1) = \varphi(-i,1) + (i-1)(0,1)\]
by Claim \ref{cl:zxn1}, and hence $ \varphi(-i,1)$ and $i\varphi(-1,1)$ must have the same first coordinate.\end{proof}

\begin{claim}\label{cl:zxn4} $\varphi(1,3),\varphi(-1,1) \in L_{-1} \cup L_{1}$, and hence either 
\begin{alignat*}{3}
&\varphi(1,3) = (1,3)&\quad&\text{and}&\quad& \varphi(-1,1) = (-1,1); \text{or} \\
&\varphi(1,3) = (-1,1)&&\text{and}&& \varphi(-1,1) = (1,3).
\end{alignat*}
\end{claim}
\begin{proof}Suppose $\varphi(1,3) \in L_{m}$ and $\varphi(-1,1) \in L_{n}$  for some $m,n \in \Z$. Then by Claim \ref{cl:zxn2} and Claim \ref{cl:zxn3}, it would follow that  $\varphi(i,l_{i}) \in L_{mi}$  and  $\varphi(-i,1) \in L_{in}$ for all $i \geq 0$. 

As the set of indecomposables of $S_{\sigma} \cap (\Z \times \N)$ must map bijectively to the set of indecomposables of  $S_{\tau} \cap (\Z \times \N)$, then by Lemma \ref{lem:indeczxn}, it must be that $\{m,n\} = \{-1,1\}$ for $\varphi$ to be surjective.

The last part of the claim follows as either $\varphi(1,3) \in L_{1}$, $\varphi(-1,1) \in L_{-1}$ or $\varphi(1,3) \in L_{-1}$, $\varphi(-1,1) \in L_{1}$, and noting that each of $(1,3)$, $(-1,1)$ must map to the unique indecomposable of the given line.
\end{proof}

\begin{claim} \label{cl:zxn5}$\varphi(1,3) = (1,3)$, $\varphi(-1,1) = (-1,1)$. \end{claim}

\begin{proof}
Suppose otherwise, which by Claim \ref{cl:zxn4} would force $\varphi(1,3) = (-1,1)$, $\varphi(-1,1) = (1,3)$. On one hand, for $n \in \N$, we have
\begin{equation}\label{-nnoneway}\varphi(-n,n) = \varphi(n(-1,1)) = (n,3n) = (n, l_{n}) + j(0,1)\end{equation}
for $j = 3n - l_{n} \in \N$. On the other hand,
\[\varphi(-n,n) = \varphi((-n,1) + (n-1)(0,1)) = \varphi(-n,1) + (n-1)(0,1).\]
Hence \[(n, l_{n}) + j(0,1) = \varphi(-n,1) + (n-1)(0,1).\]
It must therefore be that $ \varphi(-n,1) $ and $(n,l_{n})$ share the same first coordinate, and hence that $ \varphi(-n,1)  = (n,l_{n})$ and $j = n-1$. Now as
\[3n = l_{n} + j\]
from considering second coordinates in \eqref{-nnoneway}, it follows that $l_{n} = 2n+1$ for any $n \in \N$. As $2n+1 = n + (n+1)$, it must follow that $k = n+1$ is the unique index such that  $c_{k-1} < n \leq c_{k}$.
In particular, $c_n<n$ for all $n\in\N$, which is a contradiction, as the sequence $(c_k)$ grows at least exponentially by assumption \eqref{eq:sigma} from Construction \ref{cons:ssigma}.
\end{proof}

\begin{claim}
$\varphi((-\N)\times\N) = (-\N)\times\N$.
\end{claim}

\begin{proof}
Notice that every $(a, b) \in (-\N)\times\N$ can be decomposed as $(a, 1) + (b-1)(0, 1)$. Hence \begin{equation}\label{eq:varphiab} \varphi(a, b) = \varphi(a, 1) + (b-1)\varphi(0, 1) =\varphi(a, 1) + (b-1)(0, 1)\end{equation} by Claim \ref{cl:zxn1}. By Claims \ref{cl:zxn3} and \ref{cl:zxn5}, it follows that $\varphi(a,1) = (a,1)$, and hence \eqref{eq:varphiab} is equal to $(a,b)$. 
\end{proof}

Returning to the main proof of the theorem, it must be that $\varphi(S_\sigma \cap(\N_0\times\N)) = S_\tau \cap(\N_0\times\N)$ by the last claim above. We now apply Proposition \ref{prop:nonisomn0xn} and the result follows.
\end{proof}

\subsection{Intersection with \boldmath{$\Z \times \N_{0}$}} We now consider the case $U \times V = \Z \times \N_0$. 
\label{subs:4}
Each semigroup $S_\sigma \cap (\Z \times \N_0)$ is the union of lines $\{L_i : i \geq 1\}\cup\{(x, y) : x \leq 0, y \geq 0\}$.

In what follows we will repeatedly use the following observation, which follows immediately from Construction
\ref{cons:ssigma}:

\begin{lemma}
\label{la:alq0}
If
$(a,x)\in S_\sigma\cap (\Z\times\N_0)$ and $x\in\{0,1,2\}$ then $a\leq 0$. \qed
\end{lemma}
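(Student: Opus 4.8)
The plan is to read the claim directly off the two pieces of the union defining $S_\sigma$ in Construction \ref{cons:ssigma}, arguing by contradiction. Suppose $(a,x) \in S_\sigma \cap (\Z\times\N_0)$ with $x \in \{0,1,2\}$ but $a > 0$, i.e.\ $a \geq 1$. Since $a \geq 1$ violates the defining condition $x' \leq 0$ of the first set $\{(x',y') : x' \leq 0,\ y' \geq x'\}$, the pair $(a,x)$ cannot belong to that set. Hence it must lie in the second part $\bigcup_{k \geq 2}\{(x', x'+k) : x' = 1, \dots, c_k\}$, so that $x = a + k$ for some $k \geq 2$ with $1 \leq a \leq c_k$.

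The contradiction is then immediate: since $a \geq 1$ and $k \geq 2$, we get $x = a + k \geq 3$, which is incompatible with $x \in \{0,1,2\}$; hence $a \leq 0$. Equivalently, in the lowest-point language of the section, every line $L_a$ with $a \geq 1$ has lowest point $(a, l_a)$ satisfying $l_a = a + k \geq a + 2 \geq 3$, so no point with second coordinate at most $2$ can sit on a line indexed by a positive first coordinate. The only point worth flagging — and it is essentially the entire content of the lemma, slight as it is — is that the union in Construction \ref{cons:ssigma} starts at $k = 2$ rather than $k = 1$ (reflecting $c_2 = 1$ and the fact that $l_i > i$); this is exactly what forces a gap of size at least $2$ between the two coordinates of any point with positive first coordinate, and hence the bound $x \geq 3$. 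No estimates and no appeal to the growth condition $c_{i+1} \geq 2 c_i$ are required, so there is no genuine obstacle here beyond correctly bookkeeping the index range of the union.
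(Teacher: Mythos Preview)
Your argument is correct and is exactly the unpacking of Construction~\ref{cons:ssigma} that the paper has in mind: the lemma is stated there with a bare \qed\ and the preceding sentence ``which follows immediately from Construction~\ref{cons:ssigma}'', so there is no separate proof to compare against. Your observation that only the index range $k\geq 2$ (not the growth condition) is used is accurate.
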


We proceed to describe the indecomposable elements of $S_\sigma \cap (\Z \times \N_0)$  in order to prove that there are uncountably many subdirect products of $\Z \times \N_0$.
Since $S_\sigma\cap (\Z\times\N_0)$ is a monoid, indecomposability will be understood to be in the monoid sense.

\begin{lemma}\label{lem:indeczxn0}
The set of indecomposable elements of $S_\sigma \cap (\Z\times\N_0)$ is exactly the set\[ \{(i,l_{i}) : i \geq 1 \}\cup\{(0, 0), (0, 1), (-1, 0)\}.\]
\end{lemma}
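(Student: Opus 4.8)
The plan is to mirror the structure of the proof of Lemma~\ref{lem:indecn0xn}, adapting it to the monoid setting and to the presence of the extra lines $L_i$ with $i\le 0$ (whose lowest point is now $(i,0)$). First I would verify that every element not on the claimed list is monoid decomposable. For a point $(i,j)$ with $i\ge 1$ and $j>l_i$ we have $(i,j)=(i,l_i)+(j-l_i)(0,1)$ with both summands in $S_\sigma\cap(\Z\times\N_0)\setminus\{(0,0)\}$, so it is decomposable. For a point $(i,j)$ with $i\le 0$ I would split off copies of $(0,1)$ and/or $(-1,0)$: if $j\ge 1$ and $i<0$ write $(i,j)=(i,0)+(j)(0,1)$ is not quite right since $(i,0)$ may be on the list, so instead write, e.g., $(i,j)=(-1,0)+(i+1,j)$ when $i\le -1,\ j\ge1$, and $(i,0)=(-1,0)+(i+1,0)$ when $i\le -2$, and $(0,j)=(0,1)+(0,j-1)$ when $j\ge 2$; a short case analysis covering $i\le 0$, $j\ge 0$ shows everything off the list decomposes, leaving exactly $(0,0)$, $(0,1)$, $(-1,0)$, and the points $(i,l_i)$, $i\ge1$.

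Next I would show the listed elements are genuinely monoid indecomposable. The elements $(0,1)$ and $(-1,0)$ are indecomposable because $1$ is indecomposable in $\N$ and $-1$ is indecomposable (as a sum of nonpositive integers) in $\Z$ respectively, together with the fact that a decomposition would have to keep the other coordinate nonnegative, and Lemma~\ref{la:alq0} forbids the offending small cases; and $(0,0)$ is monoid indecomposable by definition since the summands would have to lie in the set with $(0,0)$ removed, yet any two such with coordinate sums $(0,0)$ would need one summand with positive first coordinate and hence second coordinate at least $l_i>0$, contradiction. For $(i,l_i)$ with $i\ge1$, suppose $(i,l_i)=(a,x)+(b,y)$ with both summands nonzero and in $S_\sigma\cap(\Z\times\N_0)$. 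By Lemma~\ref{la:alq0}, since $l_i=i+c_i^{-1}(\dots)$—more precisely since $l_i\ge 3$—one checks neither $x$ nor $y$ can force a contradiction directly, so I would argue: if both $a,b\ge 0$ then (discarding a possible $(0,0)$, which is excluded anyway) we are inside $S_\sigma\cap(\N_0\times\N)$-type reasoning and Lemma~\ref{lem:indecn0xn} applies to give indecomposability there; if one of them, say $a<0$, then $b=i-a>i\ge1$, so $y\ge l_b>l_i\ge l_i-x$ forces $x<0$, but $x\ge0$, a contradiction exactly as in the proof of Lemma~\ref{lem:indeczxn}.

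The one genuinely new subtlety—and the step I expect to need the most care—is handling the monoid identity $(0,0)$ and the interaction between the ``old'' indecomposables $(0,1),(i,l_i)$ and the ``new'' one $(-1,0)$: I must make sure that allowing $(-1,0)$ as a summand does not create a decomposition of some $(i,l_i)$ or of $(0,1)$ that did not exist before. For $(0,1)$ a decomposition using $(-1,0)$ would need a partner $(1,1)\in S_\sigma$, impossible by Lemma~\ref{la:alq0}; for $(-1,0)$ itself a decomposition would need two nonzero summands with first coordinates summing to $-1$ and second coordinates summing to $0$, hence one summand $(c,0)$ with $c\le 0$ and $c\ne 0$ (else it is the identity) — but then the other is $(-1-c,0)$ with $-1-c\le 0$ only if $c\ge -1$, so $c=-1$ and the partner is $(0,0)$, which is excluded. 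A completely analogous check rules out a $(-1,0)$-involving decomposition of $(i,l_i)$, $i\ge1$: the partner would lie on a line $L_{i+1}$ with second coordinate $l_i\ge l_{i+1}$, contradicting $l_{i+1}>l_i$. Collecting these cases completes the proof.
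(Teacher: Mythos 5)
Your proposal is correct and takes essentially the same route as the paper's proof: decompose everything off the list using $(0,1)$, $(-1,0)$ and the lowest points $(i,l_i)$, handle $(0,0),(0,1),(-1,0)$ via Lemma \ref{la:alq0}, reduce the case of two summands with nonnegative first coordinates to Lemma \ref{lem:indecn0xn}, and derive the contradiction $y\geq l_b>l_i$ when a summand has negative first coordinate. The only differences are cosmetic: the paper writes $(i,j)=(-i)(-1,0)+j(0,1)$ for $i\leq 0$ in one stroke rather than peeling off one generator at a time, and your stray phrase about ``$l_i=i+c_i^{-1}(\dots)$'' is meaningless but plays no role in the argument.
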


\begin{proof} As $(i, j) = (i, l_i) + (j - l_i)(0, 1)$ when $i \geq 1$ and $(i, j) = (-i)(-1, 0) + j(0, 1)$ for $i \leq 0$, we only need to prove that the elements of the set $\{(i,l_{i}) : i \geq 1 \}\cup\{(0, 0), (0, 1), (-1, 0)\}$ are all indecomposable.
For $(0,0)$, $(0,1)$, $(-1,0)$ this follows from Lemma \ref{la:alq0} and indecomposability of $0$ and $1$ in $\N_0$.

Now, consider an element $(i, l_i)$ for some $i > 0$ and suppose it has decomposition \[(i, l_i) = (a, x) + (b,y).\]
By \thref{lem:indecn0xn}, we cannot have $a, b \geq 0$. Suppose without loss of generality that $a < 0$, thus $b > i$. Hence, 
\[ l_i = x + y \geq y \geq l_b > l_i,\]
a contradiction. Therefore the elements $\{(i, l_i) : i \geq 1\}$ are indecomposable.\end{proof}

Now we can prove that $\Z\times\N_0$ has continuum many subdirect products up to isomorphism.

\begin{prop}
\label{prop:zxn0}
For any two sequences $\sigma$ and $\tau$ satisfying the conditions of  Construction \ref{cons:ssigma}, we have that 
\[\sigma \not = \tau \,\Rightarrow\,  S_{\sigma} \cap (\Z \times \N_0) \not \cong  S_{\tau} \cap (\Z \times \N_0).\]
Consequently, there are continuum many subdirect products of $\Z\times\N_0$ up to isomorphism.
\end{prop}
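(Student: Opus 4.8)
The plan is to mimic the strategy already used for $\Z\times\N$ (Proposition~\ref{prop:zxn}): given an isomorphism $\varphi\colon S_\sigma\cap(\Z\times\N_0)\to S_\tau\cap(\Z\times\N_0)$, pin down the images of the few ``small'' indecomposables, deduce that $\varphi$ restricts to an isomorphism between the corresponding copies of $S_\sigma\cap(\N_0\times\N)$ (or $\N_0\times\N_0$), and then invoke Proposition~\ref{prop:nonisomn0xn}. The main new feature is that the monoid $S_\sigma\cap(\Z\times\N_0)$ has three ``exceptional'' indecomposables, $(0,0)$, $(0,1)$ and $(-1,0)$, rather than an infinite family of indecomposables of the form $(i,1)$ with $i\le 0$ as in the $\Z\times\N$ case; this makes the bookkeeping somewhat different but, if anything, simpler.

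First I would record that $\varphi$ fixes the zero: $(0,0)$ is the unique identity element of the monoid, so $\varphi(0,0)=(0,0)$, and hence $\varphi$ maps monoid-indecomposables bijectively to monoid-indecomposables (Lemma~\ref{lem:indeczxn0}). Next I would identify $\varphi(0,1)$. As in Claim~\ref{cl:zxn1}, the element $(0,1)$ is characterised among indecomposables by the property that $(0,1)+(0,1)=(0,2)$ has a unique decomposition into indecomposables, whereas $(i,l_i)+(i,l_i)=(2i,l_{2i})+(2l_i-l_{2i})(0,1)$ admits an alternative one; one must also check that $(-1,0)+(-1,0)=(-2,0)=2(-1,0)$ does not give a second decomposition of $(0,2)$ — it doesn't, since $(-2,0)$ is not $(0,2)$ — so $\varphi(0,1)=(0,1)$. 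Then, exactly as in Claims~\ref{cl:zxn2}--\ref{cl:zxn3}, writing $(i,3i)=(i,l_i)+(3i-l_i)(0,1)$ for $i\ge1$ and $(-i,0)=i(-1,0)$ for $i\ge1$ shows that if $\varphi(1,3)\in L_m$ and $\varphi(-1,0)\in L_n$ then $\varphi(i,l_i)\in L_{mi}$ for all $i\ge1$ and $\varphi(-i,0)\in L_{ni}$ for all $i\ge0$.

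The surjectivity of $\varphi$ on indecomposables, via Lemma~\ref{lem:indeczxn0}, then forces $\{mi:i\ge1\}\cup\{ni:i\ge1\}$ together with the images of $(0,0),(0,1)$ to cover $\{(i,l_i):i\ge1\}\cup\{(0,0),(0,1),(-1,0)\}$ in first coordinates; since the images $\varphi(i,l_i)$ for $i\ge1$ must land among the positive-first-coordinate indecomposables $(j,l_j)$, we need $m\ge1$, and the only way to also hit $(-1,0)$ is $\varphi(-1,0)=(-1,0)$, i.e. $n=-1$, and then $m=1$. (Here one needs the analogue of Claim~\ref{cl:zxn5}: the alternative $\varphi(1,3)\in L_{-1}$, $\varphi(-1,0)\in L_1$ is killed because an element with negative first coordinate, namely $\varphi(1,3)$, cannot be indecomposable and simultaneously be of the form required — more precisely, there is no positive integer $i$ with $(i,l_i)$ in $L_{-1}$, so $m\ge1$ is forced outright, removing the need for the growth-rate argument of Claim~\ref{cl:zxn5}.) Consequently $\varphi(i,l_i)=(i,l_i)$ and $\varphi(-i,0)=(-i,0)$ for all $i$, and using $(a,b)=(-a)(-1,0)+b(0,1)$ for $a\le0$ together with $(i,j)=(i,l_i)+(j-l_i)(0,1)$ for $i\ge1$, we get that $\varphi$ is the identity on a generating set, hence $\varphi=\mathrm{id}$ and $S_\sigma\cap(\Z\times\N_0)=S_\tau\cap(\Z\times\N_0)$, giving $\sigma=\tau$. (Alternatively, conclude $\varphi(S_\sigma\cap(\N_0\times\N_0))=S_\tau\cap(\N_0\times\N_0)$ and apply Proposition~\ref{prop:nonisomn0xn}.) The uncountably-many conclusion then follows since there are continuum many sequences satisfying~\eqref{eq:sigma}.

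I expect the only genuinely delicate point to be verifying that $\varphi(0,1)=(0,1)$ and ruling out the swapped case $\varphi(1,3)\in L_{-1}$: one must be careful that the presence of the new indecomposable $(-1,0)$ (and of $(0,0)$) does not create spurious alternative decompositions of $(0,2)$, and that no positive-coordinate indecomposable can be the $\varphi$-image of a negative-coordinate one. Both are short observations given Lemmas~\ref{la:alq0} and~\ref{lem:indeczxn0}, so overall this proof is a routine adaptation of Proposition~\ref{prop:zxn}, slightly streamlined because the ``negative side'' here is a single generator $(-1,0)$ rather than an infinite antichain.
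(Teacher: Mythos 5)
Your overall strategy is the same as the paper's (fix the identity, pin down the images of the small indecomposables $(0,1)$ and $(-1,0)$, deduce that $\varphi$ preserves the positive part, then quote Proposition \ref{prop:nonisomn0xn}), but there is a genuine gap at the crucial step $\varphi(0,1)=(0,1)$. The characterisation you use --- that $(0,1)$ is the unique indecomposable $x$ for which $x+x$ admits only the one decomposition into indecomposables --- is false in $S_\sigma\cap(\Z\times\N_0)$: the indecomposable $(-1,0)$ has exactly the same property, since in any decomposition of $(-2,0)$ the second coordinates are non-negative and sum to $0$, hence are both $0$, and by Lemma \ref{la:alq0} (or just by inspecting the list in Lemma \ref{lem:indeczxn0}) the only non-zero indecomposable with second coordinate $0$ is $(-1,0)$, so $(-2,0)=2(-1,0)$ is also a unique decomposition. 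Your parenthetical check answers the wrong question: the issue is not whether $(-1,0)+(-1,0)$ yields a second decomposition of $(0,2)$, but whether the abstract property you invoke can distinguish $(0,1)$ from $(-1,0)$ --- it cannot, so it only gives $\varphi(\{(0,1),(-1,0)\})=\{(0,1),(-1,0)\}$, leaving the swapped case $\varphi(0,1)=(-1,0)$, $\varphi(-1,0)=(0,1)$ open. This matters because everything downstream --- your analogues of Claims \ref{cl:zxn2} and \ref{cl:zxn3}, the assertion that each $\varphi(i,l_i)$ lies among the indecomposables $(j,l_j)$ with $j\geq 1$, and hence $m\geq 1$, $n=-1$, $m=1$ --- is derived using $\varphi(0,1)=(0,1)$, so it cannot be used afterwards to break the tie.

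The gap is short to repair, and the paper does exactly this: since $\varphi$ preserves $\{(0,1),(-1,0)\}$, it preserves $\{(0,3),(-3,0)\}$ (their triples); now $(-3,0)=3(-1,0)$ has a unique decomposition into indecomposables, whereas $(0,3)=3(0,1)=(1,3)+(-1,0)$ has at least two, so $\varphi(0,3)=(0,3)$ and hence $\varphi(0,1)=(0,1)$, $\varphi(-1,0)=(-1,0)$. Equivalently: if the swap occurred, applying $\varphi$ to $(0,3)=(1,3)+(-1,0)$ would give $(-3,0)=\varphi(1,3)+(0,1)$, impossible because second coordinates in $\Z\times\N_0$ are non-negative. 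With this step inserted, the remainder of your argument goes through and coincides in substance with the paper's proof (the paper concludes via $\varphi((-\N)\times\N)=(-\N)\times\N$ and Proposition \ref{prop:nonisomn0xn}, which matches your alternative endgame).
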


\begin{proof} 
Suppose that $\varphi: S_\sigma \cap (\Z\times\N_0) \rightarrow S_\tau \cap (\Z\times\N_0)$ is an isomorphism. We will proceed via a series of claims, aiming to show that $\varphi(S_{\sigma} \cap (\N_0 \times \N)) = S_{\tau} \cap (\N_0 \times \N)$, and then use Proposition \ref{prop:nonisomn0xn}.

\begin{claim}
\label{cl:zxn01}
$\varphi(0,0) = (0, 0)$.
\end{claim}

\begin{proof}
This follows from the fact that $(0, 0)$ is the unique identity element.
\end{proof}

\begin{claim}
\label{cl:zxn02}
$(0, 1) + (0, 1)$ is the unique decomposition of $(0,2)$.
\end{claim}

\begin{proof}
If $(0, 2) = (a, x) + (b, y)$ with $(a,x)+(b,y)\in S_\sigma\cap (\Z\cap\N_0)$ then $x,y\in\{0,1,2\}$,
and hence $a=b=0$ by Lemma \ref{la:alq0}, from which the claim follows readily.
\end{proof}

\begin{claim}
\label{cl:zxn03}
$(-1, 0) + (-1, 0)$ is the unique decomposition of $(-2, 0)$.
\end{claim}
\begin{proof}
Suppose that \[(-2, 0) = (a, x) + (b, y).\]
Since $x,y\in\N_0$, we must have $x=y=0$.

By Lemma \ref{la:alq0} we have $a,b\leq 0$, and the claim follows.
\end{proof}

\begin{claim}
\label{cl:zxn04}
$\varphi(0,1) = (0,1)$ and $\varphi(-1, 0) = (-1, 0)$.
\end{claim}
\begin{proof}
If we consider $(i, l_i)$ for some $i > 0$, then
\[(i, l_i) + (i, l_i) = (2i, 2l_i) = (2i, l_{2i}) + (2l_i - l_{2i})(0, 1).\]
This tells us that $(0,1)$ and $(-1, 0)$ are the unique indecomposables $x$ with the property that $2x = x + x$ is the only decomposition into a sum of indecomposables, and hence \[\varphi(\{(0,1),(-1,0)\})=\{(0,1),(-1,0)\}.\]
In particular, we can also deduce that $\varphi(\{(0,3),(-3,0)\})=\{(0,3),(-3,0)\}$. The element $(-3, 0)$ has a unique decomposition into a sum of indecomposables, namely $(3,0)= 3(-1, 0)$. By way of contrast, the element $(0,3)$ has more than one such decomposition, namely
\[(0,3) = 3(0,1) = (1,3) + (-1,0).\]
This is now sufficient to prove the claim, since it must therefore be that $\varphi(0,3) = (0,3)$ and $\varphi(-3,0) = (-3,0)$.\end{proof}

\begin{claim}
\label{cl:zxn05}
$\varphi((-\N) \times \N) = (-\N) \times \N$.
\end{claim}

\begin{proof}
Using Claim \ref{cl:zxn04}, for any $(i, j) \in (-\N) \times \N$ we have
\begin{align*}
\varphi(i, j) &= \varphi((-i)(-1, 0) + j (0, 1)) = (-i)\varphi(-1, 0) + j \varphi(0, 1)\\
&=(-i)(-1, 0) + j (0, 1)=(i,j),
\end{align*}
and the claim follows.
\end{proof}

Returning to the proof of the proposition, from Claim \ref{cl:zxn05} it follows that $\varphi(S_\sigma \cap (\N_0 \times \N)) = S_\tau \cap (\N_0 \times \N)$,  and the result follows by Proposition \ref{prop:nonisomn0xn}.
\end{proof}

\subsection{Intersection with \boldmath{$\Z \times \Z$}}
\label{subs:5}
We will now consider the final case where $U \times V = \Z \times \Z$. Notice that $S_{\sigma}\cap (\Z\times\Z)$ is simply just $S_\sigma$ from Construction \ref{cons:ssigma}.

We determine the indecomposable elements in $S_\sigma$ and use this to describe the isomorphisms between these semigroups.

\begin{lemma}\thlabel{lem:indeczxz}
The set of indecomposable elements of $S_\sigma$ is exactly the set\[ \{(c_k, c_k + k) : k \geq 2 \}\cup\{(0, 0), (0,1), (-1, -1)\}.\]
\end{lemma}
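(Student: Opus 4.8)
The plan is to argue in two directions: first that every element outside the claimed set is decomposable, and then that each element of the claimed set is genuinely indecomposable in the semigroup sense (note $S_\sigma$ is not a monoid in a useful way here — $(0,0)$ lies in it but the relevant notion is semigroup indecomposability, since e.g. $(0,0)=(-1,-1)+(1,1)$, and indeed the analysis below treats $(0,0)$, $(0,1)$, $(-1,-1)$ as the ``boundary'' indecomposables just as in Lemmas \ref{lem:indeczxn} and \ref{lem:indeczxn0}).

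First I would dispose of the decomposable elements. Split $S_\sigma = \bigcup_{i\in\Z} L_i$ into the lines $L_i$ with $i\le 0$ and those with $i\ge 1$. For $i\le 0$, a point $(i,j)\in L_i$ (so $j\ge i$) with $(i,j)\notin\{(0,0),(0,1),(-1,-1)\}$ can be written using the generators $(0,1)$ and $(-1,-1)$: if $j>i$ use $(i,j)=(i,i)+(j-i)(0,1)=(i)(-1,-1)\cdot(-1)+\dots$ — more carefully, $(i,i)=(-i)(-1,-1)$ when $i\le 0$, and $(i,j)=(i,i)+(j-i)(0,1)$, so any $(i,j)$ with $i\le 0$, $j>i$ and not equal to $(0,1)$ decomposes (when $i=0,j\ge 2$ it is a multiple of $(0,1)$; when $i<0$ both summands are nonzero), and $(i,i)$ with $i\le -2$ is $(-i)(-1,-1)$, a sum of at least two copies. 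This leaves only $(0,0),(0,1),(-1,-1)$ on these lines. For $i\ge 1$, a point $(i,j)\in L_i$ with $j>l_i$ decomposes as $(i,l_i)+(j-l_i)(0,1)$; and a lowest point $(i,l_i)$ with $c_{k-1}<i\le c_k$ but $i<c_k$ (i.e. $i$ is not the rightmost index with that lowest height $k$): here I would decompose it as $(i,l_i)=(c_k,c_k+k)+(i-c_k,l_i-c_k-k)$. Since $l_i=i+k$, the second summand is $(i-c_k, i+k-c_k-k)=(i-c_k, i-c_k)$, which lies in $L_{i-c_k}$ with $i-c_k\le 0$, hence is in $S_\sigma$; and $(c_k,c_k+k)\in S_\sigma$ by construction. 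Both summands are genuinely in $S_\sigma$ and neither is ``trivial'' in the problematic way, so $(i,l_i)$ is decomposable. This shows the only possible indecomposables are exactly the claimed set.

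Next I would prove that each claimed element really is indecomposable. For $(0,0),(0,1),(-1,-1)$: a decomposition $(0,0)=(a,x)+(b,y)$ forces $b=-a$ and $x+y=0$ with $x\ge$ (first coordinate) constraints; chasing the inequalities $x\ge \min$ and using that on $L_i$ with $i>0$ one has $y\ge i+2$ (by \eqref{eq:sigma}, since $c_2=1$ the smallest lowest point above $x$-axis is $(1,3)$), one shows no nontrivial decomposition exists — this is the analogue of Lemma \ref{la:alq0}. Similarly $(0,1)$ and $(-1,-1)$: any summand with positive first coordinate has second coordinate at least $3$, ruling out these low points. For the points $(c_k,c_k+k)$ with $k\ge 2$: suppose $(c_k,c_k+k)=(a,x)+(b,y)$. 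If one of $a,b\le 0$, say $a\le 0$, then $b=c_k-a\ge c_k$, so $(b,y)$ lies on line $L_b$ with $b\ge c_k$, forcing $y\ge l_b$; but $l_b\ge c_k + k$ when $b\le c_k$... one needs $b=c_k$ exactly and then $a=0$, $x\ge$ the relevant minimum, contradiction unless the decomposition is trivial. The substantive case is $a,b\ge 1$: then writing $c_{y'-1}<a\le c_{y'}$ and $c_{z'-1}<b\le c_{z'}$ as in Lemma \ref{lem:indecn0xn}, one gets $x\ge a+y'$, $y\ge b+z'$, so $c_k+k = x+y\ge a+b+y'+z' = c_k + y'+z'$, giving $k\ge y'+z'$. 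Then, exactly as in the proof of Lemma \ref{lem:indecn0xn}, $c_k = a+b\le c_{y'}+c_{z'}\le (2^{1-z'}+2^{1-y'})c_{y'+z'-1}\le c_{y'+z'-1}\le c_{k-1}$, contradicting $c_{k-1}<c_k$. So $(c_k,c_k+k)$ is indecomposable.

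The main obstacle I anticipate is the bookkeeping in the ``boundary'' case of the $(c_k,c_k+k)$ argument — namely carefully handling decompositions where one summand has first coordinate $\le 0$, since then the other summand sits far to the right and one must use both that its height is at least its lowest point and that $l_b$ grows with $b$; this requires knowing $l_b\ge l_{c_k}=c_k+k$ whenever $b\ge c_k$ (equivalently $b>c_{k-1}$), which is immediate from the definition but needs to be invoked. The exponential growth condition \eqref{eq:sigma} does all the real work in the $a,b\ge 1$ case, essentially verbatim from Lemma \ref{lem:indecn0xn}, so I would simply refer back to that computation rather than repeat it. I would also remark that, unlike the lowest points $(i,l_i)$ with $i$ not of the form $c_k$, the points $(c_k,c_k+k)$ cannot be ``pushed down'' to the third quadrant because $c_k$ is by definition the largest index with lowest height $k$, so subtracting any positive-first-coordinate element of $S_\sigma$ from it lands below the profile.
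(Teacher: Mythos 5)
Your argument follows the paper's proof in all essentials: decompose everything outside the claimed set using $(0,1)$, $(-1,-1)$ and the points $(c_k,c_k+k)$ (your two-summand decomposition $(i,l_i)=(c_k,c_k+k)+(i-c_k,i-c_k)$ is just a repackaging of the paper's $(i,j)=(c_k,c_k+k)+a(-1,-1)+b(0,1)$), then prove indecomposability of $(0,0),(0,1),(-1,-1)$ by the coordinate-chasing analogue of Lemma \ref{la:alq0}, and of $(c_k,c_k+k)$ by splitting on the signs of the first coordinates, re-using the growth computation of \thref{lem:indecn0xn} when both summands have positive first coordinate. Two points need correcting, though neither requires a new idea.

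First, the notion of indecomposability here has to be the monoid one (no summand equal to $(0,0)$), not the semigroup one as you assert: under the semigroup notion every element $x=x+(0,0)$ is decomposable, so the lemma as stated would be false; moreover your justification $(0,0)=(-1,-1)+(1,1)$ is not a decomposition inside $S_\sigma$, because $L_1$ starts at $(1,3)$, so $(1,1)\notin S_\sigma$. Your actual computations only exclude ``trivial'' decompositions, i.e.\ they prove monoid indecomposability, so the fix is purely a matter of saying so. Second, in the case $(c_k,c_k+k)=(a,x)+(b,y)$ with $a<0$, the inequality you explicitly single out as the one to invoke, namely $l_b\ge c_k+k$ whenever $b\ge c_k$, is too weak: combined with $x\ge a$ it only yields $x+y\ge a+c_k+k$, which is no contradiction since $a<0$. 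What is needed (and what the paper uses) is the strict jump $l_b\ge b+k+1$ for $b>c_k$, which holds precisely because $c_k$ is the largest index whose line has lowest height $k$ --- a fact you gesture at in your closing remark; with it, $x+y\ge a+(c_k-a)+k+1=c_k+k+1$, the desired contradiction, while $b=c_k$ forces $a=0$ and then $(a,x)=(0,0)$, i.e.\ only the trivial decomposition survives.
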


\begin{proof} 
Denote the above set by $I$.
First we consider $(i,j)\in S_\sigma\setminus I$ and show that it is decomposable.
Notice that
\[
(i,j)=(-i)(-1,-1)+(j-i)(0,1),
\]
and this is a non-trivial decomposition of $(i,j)$ in the following cases:
\begin{itemize}
\item
$i<-1$, because $j\geq i$, so that $-i\geq 2$;
\item
$i=-1$, because $j\geq 0$, so that $-i=1$ and $j-i>0$;
\item
$i=0$, because $-i=0$ and $j\geq 2$.
\end{itemize}
Now suppose $i>0$. Let $k \geq 2$ be the smallest index such that $i\leq c_k$.
Define $a:= c_k-i\geq 0$.
Since $(i,j)\in S_\sigma$ we have $j\geq i+k$, and we let
$b:=j-(i+k)= j-(c_{k}-a + k) \geq 0$.
Moreover, as $(i,j)\not\in I$ by assumption, we cannot have both $a=0$ and $b=0$. Hence
\[
(i, j) = (c_k - a, c_k - a +k + b) = (c_k, c_k + k) + a(-1, -1) + b (0, 1)
\]
is a non-trivial decomposition of $(i,j)$.

Now we show that each $(i,j)\in I$ is indecomposable.
We consider separately the cases where $(i,j)\in \{ (0,0),(0,1)\}$, $(i,j)=(-1,-1)$, and $(i,j)=(c_k,c_k+k)$.
In each of these cases we assume that
\[
(i,j)=(a,x)+(b,y) \quad \text{for some } (a,x),(b,y)\in S_\sigma\setminus\{(0,0)\}
\]
and proceed to derive a contradiction.

{\it Case 1: $(i,j)\in\{(0,0),(0,1)\}$.}
We cannot have $a = b = 0$ because $0, 1$ are both indecomposable in $\N_0$, and this would imply
that one of $(a,x)$, $(b,y)$ equals $(0,0)$. 
Now, without loss of generality suppose that $a < 0$, so that $x \geq a$. 
Then $b > 0$, so that $y \geq b +2$. Hence $1\geq j=x + y \geq a + b + 2 = 2$, a contradiction.

{\it Case 2: $(i,j)=(-1,-1)$.}
Suppose $a = -1$, $b = 0$, with $x \geq -1$, $y \geq 1$. Then $-1=j=x + y \geq -1 + 1 > -1$. 
Next, without loss of generality suppose that $a < -1$, $b > 0$. 
Reasoning as in the previous case,
$-1=j=x + y \geq a + b + 2 = 1$, a contradiction.

{\it Case 3: $(i,j)=(c_k,c_k+k)$.}
By \thref{lem:indecn0xn}, precisely one of $a,b$ must be negative, as $(c_{k},c_{k}+k)$ is indecomposable in 
$S_{\sigma} \cap (\N_{0} \times \N)$. Assume that $a < 0$ without loss. We have $b = c_k - a$ and $y = c_k + k -x$. As $x \geq a$, then $y \leq c_k+k-a$, i.e $y \leq b+k$. But as $b = c_{k}-a > c_{k}$, we know that $l_{b} \geq b+k+1$. This gives a contradiction, as if $(b,y) \in S_{\sigma}$, then 
\[
b+k \geq y > l_{b} \geq b+k+1.\qedhere
\]
\end{proof}

Now we can prove that $\Z\times\Z$ has continuum many subdirect products up to isomorphism.

\begin{prop}
\label{prop:zxz}
For any two sequences $\sigma$ and $\tau$ satisfying the conditions of  Construction \ref{cons:ssigma}, we have that 
\[\sigma \not = \tau \,\Rightarrow\,  S_{\sigma} \not \cong  S_{\tau}.\]
Consequently, there are continuum many (semigroup) subdirect products of $\Z\times\Z$ up to isomorphism.
\end{prop}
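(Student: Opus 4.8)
The plan is to follow exactly the template established in the previous four subsections: suppose $\varphi\colon S_\sigma\to S_\tau$ is an isomorphism, pin down the images of the small indecomposable elements using purely internal (decomposition-counting) properties, conclude that $\varphi$ restricts to the identity on a `spanning part' of $S_\sigma$, and finally invoke Proposition~\ref{prop:nonisomn0xn} to force $\sigma=\tau$. Throughout I will use Lemma~\ref{lem:indeczxz}, which gives the indecomposables as $\{(c_k,c_k+k):k\geq 2\}\cup\{(0,0),(0,1),(-1,-1)\}$, and the fact that an isomorphism maps indecomposables bijectively to indecomposables.

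First I would handle the three `exceptional' indecomposables. Since $(0,0)$ is the unique identity of the monoid $S_\sigma$, we get $\varphi(0,0)=(0,0)$. Next, among all indecomposables $x$, the element $(0,1)$ is (I claim) the unique one for which $x+x$ has a unique expression as a sum of indecomposables: for each $k$, $(c_k,c_k+k)+(c_k,c_k+k)=(2c_k,2c_k+2k)$ admits the alternative decomposition $(c_m,c_m+m)+a(-1,-1)+b(0,1)$ where $c_{m-1}<2c_k\le c_m$ and $a,b\ge0$ are the appropriate offsets (using $2c_k\ge c_{k+1}$ so that $m\ge k+1$ and the offsets are genuinely available); and $(-1,-1)+(-1,-1)=(-2,-2)=(-1,-1)+(-1,-1)$ is also the only way since $(a,x)+(b,y)=(-2,-2)$ with $a\le b$ forces $a=b=-1$ (if $a<-1$ then $b>0$, so $y\ge b+2$ and $x\ge a$, giving $x+y\ge a+b+2=1>-2$). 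Meanwhile $(0,2)$ really has only the decomposition $(0,1)+(0,1)$ by Lemma~\ref{la:alq0}. Hence $\varphi(0,1)=(0,1)$. To separate $(0,1)$ from $(-1,-1)$ more sharply I would argue as in Claim~\ref{cl:zxn04}: $2x=x+x$ being the only decomposition of $2x$ into indecomposables holds for both $(0,1)$ and $(-1,-1)$, so $\varphi$ permutes $\{(0,1),(-1,-1)\}$; then look at $(0,3)$ versus $(-3,-3)$. We have $(-3,-3)=3(-1,-1)$ only, whereas $(0,3)=3(0,1)=(1,3)+(-1,-1)\cdot 1 + (0,1)\cdot 1$ — wait, more carefully $(0,3)=(1,3)+(-1,0)$ is not available here since $(-1,0)\notin S_\sigma$; instead use $(0,3)=3(0,1)$ and $(0,3)=(1,3)+(-1,0)$ fails, so I would rather use that $(0,1)+(0,1)+(0,1)$ and $(c_2,c_2+2)+(-1,-1)+\cdots$; since $c_2=1$, $(1,3)+(-1,-1)+(0,1)=(0,3)$ works, giving $(0,3)$ two decompositions while $(-3,-3)$ has one. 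This forces $\varphi(0,1)=(0,1)$, $\varphi(-1,-1)=(-1,-1)$.

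Next I would pin down the images of the line-indecomposables $(c_k,c_k+k)$. As in Claim~\ref{cl:zxn2}, writing $\varphi(1,3)=\varphi(c_2,c_2+2)\in L_p$ and using $(n,3n)=(1,3)\cdot n$ in $S_\sigma$... actually more robustly: for $i\ge1$, $(i,l_i)$ is decomposable in $S_\sigma$ only through the negative indecomposable, so its image lands on a positive line whose index is determined multiplicatively by $\varphi(1,3)$'s first coordinate $p$; bijectivity of $\varphi$ on indecomposables (Lemma~\ref{lem:indeczxz}) together with the rigidity already known for $(0,1),(-1,-1)$ forces $p=1$, hence $\varphi(1,3)=(1,3)$ and then $\varphi(c_k,c_k+k)=(c_k,c_k+k)$ for all $k$, and more generally $\varphi(i,l_i)=(i,l_i)$ for all $i\ge1$ (arguing exactly as in Proposition~\ref{prop:zxn}, Claims~\ref{cl:zxn2}--\ref{cl:zxn5}, the half-line $\{(i,x):i\le0\}$ playing the role that $L_{-1}$ played there). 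Since $\{(0,0),(0,1),(-1,-1)\}$ generate the whole half $\{(x,y):x\le0,\ y\ge x\}$ and together with the $(i,l_i)$ generate all of $S_\sigma$, we conclude $\varphi$ is the identity, so $S_\sigma=S_\tau$ and $\sigma=\tau$. Consequently the uncountably many $S_\sigma$ are pairwise non-isomorphic subdirect products of $\Z\times\Z$.

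The main obstacle I anticipate is the bookkeeping in the `alternative decomposition' arguments for $x+x$: one must be sure that when $2c_k$ falls strictly between $c_{m-1}$ and $c_m$, the offsets $a=c_m-2c_k$ and $b=(2c_k+2k)-(c_m+m)$ are both nonnegative, which is where the hypothesis $c_{i+1}\ge 2c_i$ (forcing $m\ge k+1$, hence $c_m+m\le 2c_k+2k$ after a short estimate) is used — exactly as in Lemma~\ref{la:subs}. The other delicate point is correctly distinguishing $(0,1)$ from $(-1,-1)$ and then $(0,3)$ from $(-3,-3)$, for which one must exploit that $S_\sigma$ contains $(1,3)$ (valid since $c_2=1$) to give $(0,3)$ a second decomposition that $(-3,-3)$ lacks; everything else is a transcription of the $\Z\times\N$ argument.
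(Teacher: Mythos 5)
Your overall strategy (fix $(0,0)$, $(0,1)$, $(-1,-1)$ by counting decompositions, then handle the positive indecomposables and conclude $\sigma=\tau$) is the right shape, but the central step is wrong as written. You claim that $(0,2)$ has only the decomposition $(0,1)+(0,1)$, ``by Lemma~\ref{la:alq0}''. That lemma concerns $S_\sigma\cap(\Z\times\N_0)$ and cannot be applied inside the full $S_\sigma$, where summands may have negative second coordinates: indeed $(0,2)=(-1,-1)+(1,3)$ is a second decomposition into non-zero indecomposables (the paper's Claim~2 shows these are exactly the two decompositions). Consequently your characterisation ``$(0,1)$ and $(-1,-1)$ are precisely the indecomposables $x$ for which $x+x$ decomposes uniquely'' is false --- only $(-1,-1)$ has that property --- so your deduction that $\varphi$ permutes $\{(0,1),(-1,-1)\}$, and hence the subsequent $(0,3)$ versus $(-3,-3)$ step, has no valid basis. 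What is missing is a correct way to separate $(0,1)$ from the elements $(c_k,c_k+k)$: for instance, using the invariant $d=y-x$ (which equals $0$, $1$ or $k\geq 2$ on indecomposables) one shows $(0,2)$ admits no decomposition into three or more non-zero indecomposables, whereas $2(c_k,c_k+k)$ does, namely $(c_{k+2},c_{k+2}+k+2)+(c_{k+2}-2c_k)(-1,-1)+(k-2)(0,1)$ (the paper's Claim~3). Your alternative decomposition of $2(c_k,c_k+k)$ through $c_{k+1}$ only shows it has at least two decompositions, which no longer distinguishes it from $(0,2)$ once the correct count for $(0,2)$ is known; also your inequality ``$2c_k\ge c_{k+1}$'' is stated backwards relative to \eqref{eq:sigma}.

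The second half also needs repair: in $S_\sigma$ the positive-side indecomposables are only the points $(c_k,c_k+k)$, not all lowest points $(i,l_i)$, so ``arguing exactly as in Proposition~\ref{prop:zxn}, Claims~\ref{cl:zxn2}--\ref{cl:zxn5}'' does not transfer, and the assertion $\varphi(i,l_i)=(i,l_i)$ for all $i\geq 1$ cannot be obtained that way. Fortunately, once $\varphi(0,1)=(0,1)$ and $\varphi(-1,-1)=(-1,-1)$ are correctly established, no such machinery is needed: from the relation $(c_k,c_k+k)+c_k(-1,-1)=k(0,1)$, writing $\varphi(c_k,c_k+k)=(C_j,C_j+j)$, one reads off $C_j=c_k$ and $j=k$, whence $\sigma=\tau$ directly (this is how the paper finishes, without invoking Proposition~\ref{prop:nonisomn0xn} at all). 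Alternatively your plan of reducing to Proposition~\ref{prop:nonisomn0xn} can be made rigorous, since the submonoid generated by $(0,1)$ and $(-1,-1)$ is exactly $\{(x,y):x\leq 0,\ y\geq x\}$ and is fixed pointwise, so $\varphi$ maps its complement $\bigcup_{i\geq 1}L_i$ onto the corresponding set in $S_\tau$; but as submitted, both the pinning of $(0,1)$, $(-1,-1)$ and the treatment of the positive part contain genuine gaps.
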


\begin{proof} 
Suppose that $\varphi: S_\sigma \rightarrow S_\tau$ is an isomorphism. We will proceed via a series of claims, aiming to show that $\sigma = \tau$.

\begin{claim}
$(-2,-2)$ has precisely one decomposition into a sum of non-zero indecomposables, namely
$(-2,-2)=(-1,-1)+(-1,-1)$.
\end{claim}

\begin{proof}
Suppose that
\[(-2,-2) = \sum_{i=1}^{n} (a_{i},x_{i}) \]
is a sum of $n\geq 2$ non-zero indecomposables.
 Consider the differences $d_{i} := x_{i}-a_{i}$. Noting that
\[ 
0 = -2 - (-2) =  \sum_{i=1}^{n}x_{i} - \sum_{i=1}^{n} a_{i} =  \sum_{i=1}^{n}d_{i},
\]
and that
 \[
 d_{i} = \begin{cases} 0 & \text{ if } (a_{i},x_{i}) = (-1,-1), \\  1 & \text{ if } (a_{i},x_{i}) = (0,1), \\  k & \text{ if } (a_{i},x_{i}) = (c_{k},c_{k}+k), \end{cases}
 \] 
 it follows that $(a_i,x_i)=(-1,-1)$ for all $i$, and the claim follows.
 \end{proof}

\begin{claim}
$(0,2)$ has precisely two decompositions into a sum of non-zero indecomposables, namely $(0,1)+(0,1)$ and $ (-1,-1) +(1,3)$.
\end{claim}

\begin{proof}
Suppose that 
\begin{equation}
(0,2) = \sum_{i=1}^{n} (a_{i},x_{i}), \label{diffsum}
\end{equation}
a sum of $n\geq 2$ non-zero indecomposables.
Let $d_{i}:=x_i-a_i$, as in the previous claim. 
This time, $\sum_{i=1}^{n}d_{i} = 2$. 
Hence the only choices for $(a_{i},x_{i})$ appearing in \eqref{diffsum} are $(-1,-1), (0,1)$ and $(c_{2},c_{2} +2) = (1,3)$.
Furthermore, there are either precisely two occurrences of $(0,1)$, or precisely one occurrence of $(1,3)$.
In the former case we obtain the decomposition $(0,2)=(0,1)+(0,1)$, and in the latter
$(0,2)=(1,2)+(-1,-1)$ as the only options. 
\end{proof}
 
\begin{claim}
For $k \geq 2$, the element $(2c_{k}, 2(c_{k} + k))$ has a decomposition into a sum of three or more non-zero indecomposables.
\end{claim}

\begin{proof}
Let $d:=c_{k+2}-2c_k$. Recalling \eqref{eq:sigma} from Construction \ref{cons:ssigma} we have
\[
d\geq 2c_{k+1}-2c_k\geq 4c_k-2c_k=2c_k\geq 2.
\]
Also let $n:=k-2\geq 0$.
Then
\begin{align*}
&\,(c_{k+2},c_{k+2}+k+2)+d(-1,-1)+n(0,1) \\
 =&
\,(c_{k+2}-d, c_{k+2}+k+2 -d +n)\\
=& \,\bigl(c_{k+2}- (c_{k+2}-2c_k), c_{k+2}+k+2 - (c_{k+2}-2c_k)+ k-2\bigr)\\
= & \,(2c_k, 2c_k+2k),
\end{align*}
a decomposition of $(2c_k, 2c_k+2k)$ into $1+d+n\geq 3$ non-zero indecomposables, as required.
\end{proof}

Having finished the series of claims, we can now proceed to directly prove that $\sigma = \tau$. As every element of $S_{\sigma}$ is a sum of the indecomposables described in \thref{lem:indeczxz}, we consider the images of these, each of which will be indecomposable in $S_{\tau}$. To distinguish between the sequences $\sigma$ and $\tau$, we will let $\sigma = (c_{k})_{k \geq 2}$, and $\tau = (C_{k})_{k \geq 2}$. 

Clearly $\varphi(0,0) = (0,0)$, being the identity of both monoids. By considering possible decompositions of $\varphi(2(-1,-1))$, $\varphi(2(0,1))$ and $\varphi(2(c_{k},c_{k}+k))$, Claims 1 to 3 assert that we must have $\varphi(-1,-1) = (-1,-1)$, $\varphi(0,1) = (0,1)$ and for any $k \geq 2$, $\varphi(c_{k}, c_{k}+k) = (C_{j}, C_{j} + j)$ for some $j \geq 2$.

Noting that
\[(c_{k}, c_{k}+k) + c_{k}(-1,-1) = k(0,1),\]
then applying $\varphi$ to the above shows that
\[(C_{j}, C_{j}+j) + c_{k}(-1,-1) = k(0,1),\]
and hence that $c_{k} = C_{j}$ and $k = j$. Therefore $c_{k} = C_{k}$ for all $k \geq 2$, and we conclude $\sigma =\tau$ as required. \end{proof}

\section{Concluding remarks}
\label{sec:crem}

As stated, our Main Theorem subsumes Theorem A and Theorem C for $k=2$ from  \cite{acnr1} dealing with $\N\times\N$.
However, within our proof we appeal to these results, rather than reprove them. In fact it is unclear whether our present methods could be modified to cover the $\N\times \N$ case.
For starters, the intersection $S_\sigma\cap (\N\times \N)$ is not subdirect: the elements $1$ and $2$ are missing from the first projection.
And secondly, our way of proving non-isomorphisms for different $\sigma$ via analysis of indecomposable elements would not work, as $S_\sigma\cap (\N\times \N)$ has a lot of indecomposables, which moreover depend on $\sigma$.

It is known that in groups, and more generally congruence permutable varieties, subdirect products of two factors coincide with the so called fiber products; this is known as Goursat's Lemma for groups
(see \cite[Theorem 4]{anderson09}) and Fleischer's Lemma in general (see \cite[Theorem 4.74]{mmt87}).
For two algebraic structures $A_1$, $A_2$ of the same type, a fiber product of $A_1$ and $A_2$ is a substructure of their direct product $A_1\times A_2$ of the form
$\{ (a_1,a_2)\in A_1\times A_2\::\: \varphi(a_1)=\varphi(a_2)\}$, where $\varphi_i : A_i\rightarrow Q$ ($i=1,2$) are \emph{onto} homomorphisms to a common quotient $Q$. 
It is well-known that Goursat's/Fleischer's Lemma does not extend to semigroups.
The Main Theorem offers a glimpse of just how badly it fails.
It is easy to see that each of $\N$, $\N_0$, $\Z$ has only countably many quotients (which are, respectively, all monogenic semigroups, all monogenic monoids, and all cyclic groups).
It therefore follows that for any $U,V\in \{\N,\N_0,\Z\}$ there are only countably many fiber products of $U$ and $V$.
Combining with the Main Theorem we conclude that uncountably many subdirect products of $U$ and $V$ are not fiber products.

The Main Theorem seems to suggest that it is rather hard for the direct product $U\times V$ of two infinite semigroups to contain only countably many subdirect products up to isomorphism.
But it is not impossible. One trivial example can be obtained by taking $U$ and $V$ to be two copies of an infinite zero semigroup $Z$ 
($zu=0$ for all $z,u\in Z$).
Then $Z\times Z$ is again a countable zero semigroup, i.e. $Z\times Z\cong Z$, as is every infinite subsemigroup of $Z\times Z$. Thus $Z$ is the only subdirect product of $Z\times Z$ up to isomorphism.
Another, less trivial, example is obtained by taking $U$ and $V$ to be two copies of a Tarski Monster $M$ -- an infinite simple group in which every proper subgroup has order $p$, where $p$ is a fixed prime \cite{ol81}.
Since $M\times M$ is periodic, its subsemigroups are in fact subgroups. Therefore semigroup subdirect products in $M\times M$ coincide with group subdirect products. And then it follows from simplicity and Goursat's Lemma that there are only two such subdirect products up to isomorphisms, namely $M$ and $M\times M$.

Motivated by the above discussion, we ask:

\begin{que}
Do there exist infinite non-periodic semigroups $U$ and $V$ such that $U\times V$ contains only countably many pairwise non-isomorphic subdirect products? Do there exist such $U$ and $V$ which are commutative?
\end{que}

We conjecture that the answer to the first question is affirmative and negative for the second.

\bibliographystyle{plain}

\end{document}